\documentclass[oneside, 12pt]{amsart} \topmargin      -10mm \textwidth      160 true mm
\usepackage{amsmath, amssymb, amsfonts, amstext, amsthm, amscd}
\usepackage[mathscr]{euscript}
\usepackage{enumerate}
\usepackage{tikz,color,soul}
\usepackage[english]{babel}
\usepackage{chngcntr}
\usepackage{stmaryrd}
\usetikzlibrary{arrows.meta, positioning,arrows}
\textheight     240 true mm \oddsidemargin   -0.4cm

\newtheorem{theorem}{Theorem}[section]
\newtheorem{ques}[theorem]{Question}
\newtheorem{prop}[theorem]{Proposition}
\newtheorem{lem}[theorem]{Lemma}
\newtheorem{cor}[theorem]{Corollary}
\theoremstyle{definition}
\newtheorem{definition}[theorem]{Definition}
\newtheorem{eg}[theorem]{Example}
\newtheorem{remark}[theorem]{Remark}

\numberwithin{equation}{section}
\title{Extensions of i-reversible rings}
\address{Department of Mathematics, St. Joseph's College (Autonomous), Bangalore, India}
\email{v.bhabani.lama@gmail.com}
\author{Vivek Bhabani Lama, Suhas B N, Susobhan Mazumdar and Raisa DSouza }
\address{Department of Mathematics, St. Joseph's College (Autonomous), Bangalore, India}
\email{suhas.b.n@sjc.ac.in}
\address{Department of Mathematics, St. Joseph's College (Autonomous), Bangalore, India}
\email{susobhan@sjc.ac.in}
\address{Department of Mathematics, St. Joseph's College (Autonomous), Bangalore, India}
\email{raisadsouza@sjc.ac.in}
\keywords{I-reversibility, Reversibility, Armendariz}
\subjclass[2020]{16U80, 16U40, 16U99} 
\date{}

\begin{document}
	
	\begin{abstract}
		A ring $R$ is said to be i-reversible if for every $a,b$ $\in$ $R$, $ab$ is a non-zero idempotent implies $ba$ is an idempotent. It is known that the rings $M_n(R)$ and $T_n(R)$ (the ring of all upper triangular matrices over $R$) are not i-reversible for $n \geq 3$. In this article, we provide a non-trivial i-reversible subring of $M_n(R)$ when $n \geq 3$ and $R$ has only trivial idempotents. We further provide a maximal i-reversible subring of $T_n(R)$ for each $n\geq 3$, if $R$ is a field. We then give conditions for i-reversibility of Trivial, Dorroh and Nagata extensions. Finally, we give some independent sufficient conditions for i-reversibility of polynomial rings, and more generally, of skew polynomial rings. 
	\end{abstract}
	
	\maketitle
  
	\section{Introduction}
 A ring $R$ is said to be reversible if for every $a,b\in R$, $ab=0$ then $ba=0$. In 1999,  Cohn \cite{cohn1999reversible}  formally introduced the notion of a reversible ring. Anderson and Camillo  \cite{anderson1999semigroups}, studied reversible rings under the nomenclature of $ZC_2$, while Krempa and Niewieczerzal \cite{krempa1977rings}, called them $C_0$ rings. It is easy to see that a ring $R$ is reversible if and only if for every $a,b\in R$, $ab$ is an idempotent implies $ba$ is an idempotent.
 In 2020, Anjana Khurana and Dinesh Khurana \cite{khurana2020reversible}, studied a new class of rings where $ab$ is a non-zero idempotent implies $ba$ is an idempotent and called them i-reversible rings. It was shown that the class of reversible rings is a proper subclass of the class of i-reversible rings.  Jung et.al \cite{jung2019reversibility}  studied the same class of rings under the terminology of quasi-reversible rings.
 
 A complete characterization of i-reversibility of matrix rings over commutative rings was established  in \cite[Theorem~4.3]{khurana2020reversible}. It was proved that for a commutative ring $R$, the matrix ring $M_n(R)$ is i-reversible if and  only if $n=2$ and $R$ has only trivial idempotents. It was also shown in \cite[Corollary~3.2]{khurana2020reversible}  that the upper triangular matrix ring $T_n(R)$ is not i-reversible  if $n > 2$. One may ask if there exists at least a non-trivial i-reversible subring of $T_n(R)$. We prove that the ring $D_n(R)=\big\lbrace (a_{ij}) \in T_n(R) |\ a_{11} =a_{22} = \cdots = a_{nn} \big\rbrace$ is such an example for any $n$, provided $R$ has only trivial idempotents (see Theorem \ref{P4.7} and Remark \ref{rr}). We also prove that for $n \geq 5$, if $S$ is a subring of $T_n(R)$ such that $D_n(R) \subsetneqq S$ then $S$ necessarily has a non-trivial idempotent and $S$ is not i-reversible (see Theorem \ref{TH3.7} and Theorem \ref{TH3.4}). Further, when $n \geq 5$ and $R$ is a field, we prove that $D_n(R)$ is a maximal i-reversible subring of $T_n(R)$ (see Theorem \ref{TH3.8}). Finally, when $n=3$ or $4$ and $R$ is a field, we prove that $D_n(R)$ is not a maximal i-reversible subring of $T_n(R)$ (see Examples \ref{EG1} and \ref{EG2}). We also provide maximal i-reversible subrings of $T_n(R)$ strictly containing $D_n(R)$ in these two cases (see Theorems \ref{TRT} and \ref{TRTT}).
 
In 2003, Kim and Lee \cite{kim2003extensions}  show that the polynomial ring over a reversible ring need not be reversible. They show however that if $R$ is also Armendariz then $R[x]$ is reversible. In the same article, they provide some sufficient conditions for reversibility of Dorroh, Nagata and Trivial extensions of a ring. In this paper, we discuss similar questions for i-reversibility. Analogous to \cite{kim2003extensions}, we show that if $R$ is i-reversible and Armendariz then $R[x]$ is i-reversible (see Theorem \ref{T3.21}). We also provide another sufficient condition for the i-reversibility of $R[x]$ (see Theorem \ref{P3.23}). Further, we prove that for Dorroh extensions, under suitable conditions, the reversibility and i-reversibility coincide (see Theorem \ref{P4.19}). Furthermore, we prove if $R$ has only trivial idempotents then the Trivial and Nagata extensions of $R$ over itself are i-reversible (see Theorems \ref{P4.14} and  \ref{P4.21}). We also provide a necessary and sufficient condition for the Trivial extension of $R$ to be i-reversible when $R$ is abelian and not reversible (see Theorem \ref{THM!!!!!}). 

An interesting generalization of  polynomial rings is the skew polynomial rings. The reversibility of skew polynomial rings has been studied in \cite{jin2017commutativity,pourtaherian2011skew}. We provide two independent sufficient conditions for the i-reversibility of skew polynomial rings (see Theorems \ref{T3.29} and \ref{THMM}).

 Throughout the course of the paper, all rings are considered to be associative rings with unity (unless specified otherwise). 
	\section{Trivial Extensions.}\label{S2}
	In this section, we show that if $R$ has only trivial idempotents then the trivial extension of $R$ over itself is i-reversible. We also provide examples to show that this condition is not necessary. However, if $R$ is abelian and not reversible, it turns out that this condition is necessary as well.
	
	\begin{definition}
		Let $R$ be a ring and $M$ be a bimodule of $R$. The trivial extension of $R$ by $M$ is the ring $T(R,M)=R\bigoplus M$ with component wise addition and the following multiplication:
		$$(r_1,m_1)\cdot(r_2,m_2)=(r_1r_2,r_1m_2+m_1r_2).$$
	\end{definition}

	\noindent The ring $T(R,M)$ is isomorphic to the ring of all matrices of the form
	$\begin{bmatrix}
	r&m\\
	0&r
	\end{bmatrix}$,
	where $r \in R$ and $m \in M$ with usual matrix operations. For now, we focus on the case $M=R$.
	
	\begin{remark}
		The following statements are easy to observe,
		\begin{enumerate}
			\item If $T(R,R)$ is i-reversible then $R$ is i-reversible.
			\item If $R$ is commutative then $T(R,R)$ is commutative (hence i-reversible). 
			\item If $R$ is reduced then $T(R,R)$ is reversible \cite[Proposition~1.6]{kim2003extensions} (hence i-reversible).
		\end{enumerate}
		
	\end{remark}
	\noindent At this stage, we ask whether $T(R,R)$ is i-reversible when $R$ is i-reversible.  In view of the above remark this answer is trivial if $R$ is commutative or reduced. Hence, it is natural to ask the same question when $R$ is neither commutative nor reduced.\\
	We now provide two examples of non-commutative non-reduced rings whose trivial extensions are not i-reversible.
	
	\begin{eg}\label{2.3}
		Let $\mathbb H$ be the ring of quaternions, $S=T(\mathbb{H},\mathbb{H})$ and $R=T_2(S)$. We have, $e=$ 
		$\begin{bmatrix}
		E_{11}&0\\
		0&E_{11}
		\end{bmatrix}$,  where $E_{11}=\begin{bmatrix}
		1_
		S&0\\
		0&0
		\end{bmatrix}$  is a non-trivial idempotent.  Suppose $T(R,R)$ is i-reversible. Then, the corner ring $e\,T(R,R)\,e$ is reversible (see \cite[Proposition 2.1(4)]{khurana2020reversible}). However, $e\,T(R,R)\,e \cong T(S,S)$, which is not reversible (see \cite[Example 1.7]{kim2003extensions}). This is a contradiction. Therefore, $T(R,R)$ is not i-reversible.
	\end{eg}
	
	\begin{eg}\cite[Example~1.10]{HR}\label{2.4}
		Let $R=T(\mathbb{H}\times\mathbb{H},\mathbb{H}\times\mathbb{H})$. Since $\mathbb{H}\times\mathbb{H}$ is a reduced, $R$ is reversible (see \cite[Proposition 1.6]{kim2003extensions}). \\ Let $\alpha=\begin{bmatrix}
		\begin{bmatrix}
		(1,0)&(0,i)\\
		(0,0)&(1,0)
		\end{bmatrix} &\begin{bmatrix}
		(0,j)&(0,0)\\
		(0,0)&(0,j)
		\end{bmatrix}\\
		\begin{bmatrix}
		(0,0)&(0,0)\\
		(0,0)&(0,0)
		\end{bmatrix}&\begin{bmatrix}
		(1,0)&(0,i)\\
		(0,0)&(1,0)
		\end{bmatrix}
		\end{bmatrix}$  and $\beta=\begin{bmatrix}
		\begin{bmatrix}
		(1,0)&(0,1)\\
		(0,0)&(1,0)
		\end{bmatrix} &\begin{bmatrix}
		(0,k)&(0,0)\\
		(0,0)&(0,k)
		\end{bmatrix}\\
		\begin{bmatrix}
		(0,0)&(0,0)\\
		(0,0)&(0,0)
		\end{bmatrix}&\begin{bmatrix}
		(1,0)&(0,1)\\
		(0,0)&(1,0)
		\end{bmatrix}
		\end{bmatrix}$.
		\vspace{1mm}
		\\
		It is clear that $\alpha\beta$ is a non-zero idempotent. However, $\beta\alpha$ is not an idempotent. Therefore, $T(R,R)$ is not i-reversible.
	\end{eg}
\begin{remark}
In Example \ref{2.3}, the ring $R$ has a non-trivial idempotent $e$ which is not central. Whereas in Example \ref{2.4}, the ring $R$ has non-trivial idempotents and is also abelian. It thus becomes natural to impose the condition that $R$ has only trivial idempotents in which case, $R$ is also i-reversible and abelian.
\end{remark}

\begin{theorem}\label{P4.14}
		Let $R$ be a ring. 
		If $R$ has only trivial idempotents then $T(R,R)$ has only trivial idempotents and hence is i-reversible. 
	\end{theorem}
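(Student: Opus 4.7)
The plan is to prove the statement in two stages. First, I would show that $T(R,R)$ has only the trivial idempotents $(0,0)$ and $(1,0)$ whenever $R$ does, and then deduce i-reversibility as a quick consequence of having only trivial idempotents.

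For the first stage, take any idempotent $(r,m) \in T(R,R)$. Squaring gives
\[
(r,m)^2 = (r^2,\; rm + mr),
\]
so the idempotent condition forces $r^2 = r$ and $rm + mr = m$. By hypothesis $R$ has only trivial idempotents, so $r \in \{0,1\}$. If $r = 0$ the second equation becomes $0 = m$, giving the idempotent $(0,0)$, and if $r = 1$ it becomes $2m = m$, i.e.\ $m = 0$, giving $(1,0)$. Thus $T(R,R)$ has only trivial idempotents.

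For the second stage, I would use the simple observation that \emph{any} ring $S$ with only trivial idempotents is automatically i-reversible: if $ab$ is a non-zero idempotent in $S$ then $ab$ must equal $1_S$, and so
\[
(ba)^2 = b(ab)a = b\cdot 1_S\cdot a = ba,
\]
making $ba$ idempotent. Applying this to $S = T(R,R)$ finishes the proof.

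There is no serious obstacle here; the only point that requires a small sanity check is the step $2m = m \Rightarrow m = 0$, which holds in any ring regardless of characteristic since it is just cancellation of $m$ from $m + m = m$. It may be worth noting that the second stage is the reason the proposition bundles the two conclusions together: having only trivial idempotents is strictly stronger than i-reversibility in general, but the implication from the former to the latter is the natural one-line argument above, and it is this implication that makes the first stage the substantive content of the theorem.
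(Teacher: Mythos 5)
Your proof is correct and follows essentially the same route as the paper: compute the idempotent condition in $T(R,R)$, show the off-diagonal entry vanishes, and conclude i-reversibility from having only trivial idempotents. The only cosmetic difference is that the paper resolves $rm+mr=m$ for a general idempotent $r$ in an abelian ring by noting $2r-1$ is a unit (a slightly more general fact it reuses later in Corollary \ref{P4.17}), whereas you substitute $r=0$ and $r=1$ directly; both are valid.
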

	\begin{proof}
		Since $R$ has only trivial idempotents, $R$ is abelian. We first show that if $R$ is abelian, every idempotent of $T(R,R)$ is of the form
		$\begin{bmatrix}
		a & 0\\
		0 & a
		\end{bmatrix}$, where $a$ is an idempotent in $R$ . Let $\alpha$ =
		$\begin{bmatrix}
		a&b\\
		0&a
		\end{bmatrix}$ be an idempotent in $T(R,R)$. A straight forward computation of $\alpha^2=\alpha$ yields  $a^2=a$ and $ab+ba=b$. Thus, $a$ is an idempotent in $R$. Since $R$ is an abelian ring with idempotent $a$, we get
		$(2a-1)b=0$. Now, $(2a-1)^2=4a^2-4a+1=1$, which shows $2a-1$ is a unit. Therefore, $b=0$. Hence  $\alpha$ = $\begin{bmatrix}
		a&0\\
		0&a
		\end{bmatrix}$, where $a$ is an idempotent in $R$. However, since $R$ has only trivial idempotents the only choices for $a$ are $0$ and $1$. Therefore,  $T(R,R)$ is a ring with only trivial idempotents and hence is i-reversible.
	\end{proof}
	\noindent In view of the above theorem, we may ask, does the i-reversibility of $T(R,R)$ ensure that $R$ has only trivial idempotents? The following example answers this question in the negative. 
	
	\begin{eg}\label{4.16}
		\noindent Let $S$ be a reduced ring with only trivial idempotents and $R=S\bigoplus S$. Then $R$ is a reduced ring with $(1,0)$ and $(0,1)$ as the only non-trivial idempotents. Since $R$ is reduced, $T(R,R)$ is reversible (see \cite[Proposition~1.6]{kim2003extensions}). Hence, $T(R,R)$ is i-reversible.
	\end{eg}

	\begin{cor}\label{P4.17}
		Let $S$ be a  ring and $R=T(S,S)$. 
		If $S$ has only trivial idempotents then $T(R,R)$ is i-reversible. 
	\end{cor}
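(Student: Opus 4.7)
The plan is to reduce the corollary directly to Theorem \ref{P4.14} applied to the ring $R = T(S,S)$. To do this, I need to verify that the hypothesis of Theorem \ref{P4.14} holds for $R$, that is, that $R$ itself has only trivial idempotents.

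First, I would observe that since $S$ has only trivial idempotents, $S$ is abelian. Now I can directly reuse the computation performed in the proof of Theorem \ref{P4.14}: for an abelian ring $S$, every idempotent of $T(S,S)$ has the form $\begin{bmatrix} a & 0 \\ 0 & a \end{bmatrix}$ with $a$ an idempotent of $S$. The argument there relied only on the abelianness of the base ring (solving $a^2=a$ and $ab+ba=b$, and using that $2a-1$ is a unit), so it applies verbatim to $S$ in place of $R$.

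Since the only idempotents available in $S$ are $0$ and $1$, the only idempotents in $R = T(S,S)$ are $\begin{bmatrix} 0 & 0 \\ 0 & 0 \end{bmatrix}$ and $\begin{bmatrix} 1 & 0 \\ 0 & 1 \end{bmatrix}$, i.e., $R$ has only trivial idempotents.

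Having established that $R$ has only trivial idempotents, I would conclude by invoking Theorem \ref{P4.14} with $R$ in the role of the base ring: it yields that $T(R,R)$ has only trivial idempotents and is therefore i-reversible. Since no new difficulties arise beyond quoting the preceding theorem twice (once to control idempotents of $T(S,S)$, once to obtain i-reversibility of $T(R,R)$), there is no genuine obstacle in this corollary; the only thing to be careful about is explicitly noting that the ``idempotents are diagonal'' portion of the proof of Theorem \ref{P4.14} applies to any abelian ring, not merely to rings with trivial idempotents, so it can be quoted at the intermediate stage.
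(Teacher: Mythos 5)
Your proposal is correct and follows essentially the same route as the paper: both show that $S$ abelian forces every idempotent of $R=T(S,S)$ to be of the form $\begin{bmatrix} a & 0 \\ 0 & a\end{bmatrix}$ with $a$ an idempotent of $S$, conclude that $R$ has only trivial idempotents, and then invoke Theorem \ref{P4.14}. Your explicit remark that the ``idempotents are diagonal'' step needs only abelianness of the base ring is exactly the observation the paper relies on.
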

	\begin{proof}
		Firstly, since $S$ has only trivial idempotents, $S$ is abelian. This implies that every idempotent of $R = T(S,S)$ is of the form
		$\begin{bmatrix}
		a & 0\\
		0 & a
		\end{bmatrix}$. However, since $S$ has only trivial idempotents, the only choices for $a$ are $a=0$ or $a=1$. Thus, $R$ has only trivial idempotents. Therefore, $T(R,R)$ is i-reversible.
	\end{proof}

\begin{theorem}\label{TH!}
Let $R$ be a ring with a non trivial central idempotent $e$. If $T(R,R)$ is i-reversible then $R$ is reversible.
\end{theorem}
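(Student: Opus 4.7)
The plan is to exploit the central idempotent $e$ to Peirce-decompose both $R$ and $T(R,R)$, then use i-reversibility of a direct product to force full reversibility of each factor, and finally reassemble. Writing $R_1=eR$ and $R_2=(1-e)R$, since $e$ is central we obtain a ring isomorphism $R\cong R_1\times R_2$, and the matrix $\begin{bmatrix} e & 0 \\ 0 & e\end{bmatrix}$ is a central idempotent of $T(R,R)$. A straightforward componentwise check yields the induced isomorphism $T(R,R)\cong T(R_1,R_1)\times T(R_2,R_2)$ with coordinatewise operations, so i-reversibility of $T(R,R)$ transfers to the right-hand side.

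The key step is the following general observation, which I would isolate as a small lemma: if a direct product $A\times B$ of non-trivial unital rings is i-reversible, then each factor is reversible. Given $\alpha,\beta\in A$ with $\alpha\beta$ an idempotent (possibly zero), consider $(\alpha,1_B)(\beta,1_B)=(\alpha\beta,1_B)$ in $A\times B$. This element is an idempotent, and because its second coordinate is $1_B\neq 0$, it is a \emph{non-zero} idempotent. I-reversibility of $A\times B$ then yields that $(\beta,1_B)(\alpha,1_B)=(\beta\alpha,1_B)$ is an idempotent, so $\beta\alpha$ is an idempotent in $A$. Since this holds for every idempotent $\alpha\beta$, the ring $A$ is reversible by the equivalent characterization of reversibility noted in the introduction ($ab$ idempotent $\Rightarrow$ $ba$ idempotent). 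Applying this to $T(R_1,R_1)\times T(R_2,R_2)$ shows that both $T(R_1,R_1)$ and $T(R_2,R_2)$ are reversible.

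To finish, I would use that reversibility passes to unital subrings and to finite direct products. The diagonal embedding $r\mapsto\begin{bmatrix} r & 0 \\ 0 & r\end{bmatrix}$ realises each $R_i$ as a unital subring of $T(R_i,R_i)$, hence $R_1$ and $R_2$ are reversible, and therefore $R\cong R_1\times R_2$ is reversible. The only subtlety is ensuring that the probe $(\alpha\beta,1_B)$ really qualifies as \emph{non-zero} — which is exactly where the non-triviality of $e$ is needed, since it guarantees that both $R_1$ and $R_2$ (and hence $T(R_1,R_1)$, $T(R_2,R_2)$) are non-zero unital rings. No delicate calculation is required; the heart of the argument is identifying the right decomposition and the right test element.
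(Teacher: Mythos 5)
Your proof is correct, but it takes a genuinely different and more self-contained route than the paper. The paper's argument is two lines: $R$ inherits i-reversibility from $T(R,R)$ (as the diagonal subring), and then it invokes Proposition 2.1(4) of Khurana--Khurana, which says precisely that an i-reversible ring with a non-trivial central idempotent is reversible. You instead re-prove the substance of that cited result from scratch: your lemma that a non-trivial direct product $A\times B$ being i-reversible forces each factor to be reversible --- via the probe $(\alpha\beta,1_B)$, whose non-zero second coordinate converts the ``non-zero idempotent'' hypothesis into the unrestricted ``idempotent'' characterization of reversibility --- is exactly the mechanism behind that proposition, since a non-trivial central idempotent is the same datum as a non-trivial direct product decomposition. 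All the supporting steps check out: $T(R,R)\cong T(R_1,R_1)\times T(R_2,R_2)$ because $e$ is central, reversibility descends to subrings and is closed under finite products, and the case $\alpha\beta=0$ is correctly absorbed because $(0,1_B)\neq 0$. The one inefficiency is that you route the argument through $T(R_1,R_1)\times T(R_2,R_2)$ and then descend to the $R_i$, when you could apply your lemma directly to $R\cong R_1\times R_2$ after observing that $R$ is i-reversible as a subring of $T(R,R)$; this costs nothing in correctness. What your approach buys is independence from the external reference; what the paper's buys is brevity.
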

\begin{proof}
Let $T(R,R)$ be i-reversible. Then $R$ is also i-reversible. Since $e$ is a non-trivial central idempotent in $R$, we can conclude that $R$ is reversible (see  \cite[Proposition~2.1.(4)]{khurana2020reversible}).
\end{proof}
\begin{cor}\label{THM!!}
Let $R$ be an abelian ring. If $T(R,R)$ is i-reversible then $R$ is either reversible or $R$ has only trivial idempotents.
\end{cor}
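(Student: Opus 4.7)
The plan is to treat this as an immediate consequence of Theorem \ref{TH!} by performing a simple dichotomy on the idempotent structure of $R$. The crucial observation is that the hypothesis "abelian" precisely means every idempotent of $R$ is central, which is exactly the bridge needed to invoke the previous theorem.

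First I would split into two cases depending on whether $R$ has any non-trivial idempotent. If $R$ has only trivial idempotents, there is nothing to prove since this is the second alternative of the conclusion. Otherwise, pick a non-trivial idempotent $e \in R$. Because $R$ is abelian, every idempotent of $R$ is central, so $e$ is a non-trivial central idempotent in $R$. At this point the hypotheses of Theorem \ref{TH!} are fulfilled, and that theorem immediately yields that $R$ is reversible, giving the first alternative.

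There is no real obstacle here; the proof is a short case analysis whose only content is to recognize that "abelian" upgrades a non-trivial idempotent to a non-trivial \emph{central} idempotent so Theorem \ref{TH!} applies. Consequently the entire argument fits in a few lines and does not need any additional computation with matrices in $T(R,R)$.
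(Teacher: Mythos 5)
Your argument is correct and is exactly the reasoning the paper intends: the paper's proof simply says the result is ``clear in view of Theorem \ref{TH!} and the fact that $R$ is abelian,'' and your case split on the existence of a non-trivial idempotent, upgraded to a central one by abelianness, is precisely that unstated argument made explicit. No issues.
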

\begin{proof}
The proof is clear in view of Theorem \ref{TH!} and the fact that $R$ is abelian. 
\end{proof}

We characterize the i-reversibility of the trivial extension of rings which are abelian but not reversible.
\begin{theorem}\label{THM!!!!!}
Let $R$ be an abelian ring which is not reversible. The ring $T(R,R)$ is i-reversible if and only if $R$ has only trivial idempotents.
\end{theorem}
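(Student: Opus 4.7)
The statement is essentially an immediate combination of two earlier results in this section, so the proof plan is short.

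For the backward direction, assume $R$ has only trivial idempotents. Then Theorem \ref{P4.14} applies directly and gives that $T(R,R)$ is i-reversible. This half does not require the ``not reversible'' hypothesis at all.

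For the forward direction, assume $T(R,R)$ is i-reversible. Since $R$ is abelian by hypothesis, I would invoke Corollary \ref{THM!!} to conclude that $R$ is either reversible or has only trivial idempotents. But we are given that $R$ is not reversible, so the only remaining possibility is that $R$ has only trivial idempotents.

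There is essentially no obstacle here: both implications are one-line appeals to results already established in the section. The only thing to note is that the ``abelian and not reversible'' hypothesis is precisely what lets Corollary \ref{THM!!} (whose conclusion is a disjunction) be sharpened into a biconditional, by ruling out the reversible branch of the dichotomy. I would present the proof as two short paragraphs, cite Theorem \ref{P4.14} for one direction and Corollary \ref{THM!!} for the other, and explicitly point out that the non-reversibility assumption eliminates the reversible alternative.
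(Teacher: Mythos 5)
Your proposal is correct and matches the paper's proof exactly: the paper also cites Theorem \ref{P4.14} for the converse and deduces the forward direction from Corollary \ref{THM!!} by using the non-reversibility hypothesis to eliminate the reversible branch. Nothing to add.
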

\begin{proof}
If $T(R,R)$ is i-reversible, then since $R$ is not reversible, by the previous corollary, $R$ has only trivial idempotents .
For the converse, see Theorem \ref{P4.14}.
\end{proof}

	\section{ Some Subrings of Triangular Matrix Rings. }
In  \cite{khurana2020reversible}, A.Khurana and D.Khurana show that for a ring $R$, $M_n(R)$ is not i-reversible for all $n > 2$. In this section, we show that $D_n(R)$ is an i-reversible subring of $M_n(R)$ when $n \geq 3$ and $R$ has only trivial idempotents.  Further, we prove that $D_n(R)$ is a maximal i-reversible subring of $T_n(R)$, when $R$ is a field and $n \geq 5$. Also, for the cases $n=3$ and $4$, we show that $D_n(R)$ is not a maximal i-reversible subring of $T_n(R)$ when $R$ is a field. Finally, in these two cases, we provide maximal i-reversible subrings of $T_n(R)$ strictly containing $D_n(R)$.
	\begin{theorem}\label{P4.7}
		Let $R$ be a ring and
		$n \geq 3$ be a positive integer. The ring $D_n(R)$ is i-reversible if and only if $R$ has only trivial idempotents. 
	\end{theorem}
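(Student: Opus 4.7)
The plan is to prove both directions of the biconditional separately.

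For sufficiency, assuming $R$ has only trivial idempotents, I would take any $\alpha,\beta\in D_n(R)$ with $\alpha\beta$ a nonzero idempotent and decompose $\alpha=aI+A$, $\beta=bI+B$, where $a,b\in R$ are the common diagonal entries and $A,B$ are strictly upper triangular. Since the diagonal of $\alpha\beta$ is $abI$, the element $ab$ must be idempotent in $R$, so $ab\in\{0,1\}$. I expect to rule out $ab=0$ immediately, because $\alpha\beta$ would then be strictly upper triangular, hence nilpotent, and a nilpotent idempotent must vanish, contradicting the nonzero hypothesis. So $ab=1$, and then $(ba)^2=b(ab)a=ba$ makes $ba$ idempotent; the case $ba=0$ is excluded since it forces $b=b(ab)=(ba)b=0$. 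Thus $ba=1$, so writing $\alpha\beta=I+N$ with $N$ strictly upper triangular (hence nilpotent), idempotence yields $N(I+N)=0$; as $I+N$ is unipotent and therefore a unit, $N=0$ and $\alpha\beta=I$. The conclusion $(\beta\alpha)^2=\beta(\alpha\beta)\alpha=\beta\alpha$ is then immediate.

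For necessity, assume $R$ has a nontrivial idempotent $e$ and set $f=1-e$; then $e,f\neq 0$, $ef=fe=0$, and $f^2=f$. I would exhibit the explicit witness
\[ \alpha = eI_n + fE_{23}, \qquad \beta = eI_n + fE_{12}, \]
where $E_{ij}$ denotes the $n\times n$ matrix unit. Both matrices lie in $D_n(R)$ because their diagonals are the constant $e$. Using $ef=0$ and $E_{23}E_{12}=0$, a direct computation gives $\alpha\beta=eI_n$, which is a nonzero idempotent. On the other hand, $E_{12}E_{23}=E_{13}$ together with $f^2=f$ yield $\beta\alpha=eI_n+fE_{13}$, and squaring this produces $eI_n$ (since $ef=fe=0$ and $E_{13}^2=0$); thus $\beta\alpha\neq(\beta\alpha)^2$ and $\beta\alpha$ is not idempotent, contradicting i-reversibility.

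The only non-routine step is choosing the counterexample in the necessity direction. The guiding idea is the asymmetry $E_{12}E_{23}=E_{13}\neq 0=E_{23}E_{12}$ combined with the orthogonality $ef=0$, which together annihilate every cross term except the obstruction $fE_{13}$ that appears only on the $\beta\alpha$ side. The sufficiency direction, by contrast, reduces cleanly once one observes that in the absence of nontrivial idempotents the diagonal of the product is forced into $\{0,1\}$, and the nonzero hypothesis eliminates the former case.
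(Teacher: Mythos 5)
Your proof is correct. In the sufficiency direction you are doing essentially what the paper does: the paper shows directly that every idempotent $P$ of $D_n(R)$ has an idempotent diagonal entry $p$, forcing $P=I$ (unipotent case) or $P=0$ (nilpotent case), and then invokes the general fact that a ring with only trivial idempotents is i-reversible; your argument is an inline version of the same dichotomy applied to the product $\alpha\beta$, landing on $\alpha\beta=I$ and hence $(\beta\alpha)^2=\beta(\alpha\beta)\alpha=\beta\alpha$. The paper's phrasing yields the slightly stronger, reusable statement that $D_n(R)$ itself has only trivial idempotents, which it exploits elsewhere, but for the theorem at hand the two are interchangeable. In the necessity direction you genuinely diverge: the paper takes $\alpha=eI$ and appeals to the external result that corner rings $\alpha D_n(R)\alpha$ of an i-reversible ring must be reversible, then shows $eE_{2n}\cdot eE_{12}=0\neq eE_{12}\cdot eE_{2n}$ violates this; you instead produce explicit witnesses $\alpha=eI_n+fE_{23}$, $\beta=eI_n+fE_{12}$ with $f=1-e$, for which $\alpha\beta=eI_n$ is a nonzero idempotent while $\beta\alpha=eI_n+fE_{13}$ is not. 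Your version is self-contained (no citation of the corner-ring proposition needed) and exploits the same underlying asymmetry $E_{12}E_{23}=E_{13}\neq 0=E_{23}E_{12}$; the paper's version is shorter given the machinery it already has in hand. Both computations check out, including the orthogonality $ef=fe=0$ and $f^2=f$ that kill all cross terms except $fE_{13}$.
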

	\begin{proof}
		Let $D_n(R)$ be i-reversible. If possible, let $R$ have a non-trivial  idempotent $e$. Let $ \alpha$ be the diagonal matrix with diagonal entries as $e$. Since $\alpha$ is a non trivial idempotent and $D_n(R)$ is i-reversible, the corner ring $\alpha D_n(R) \alpha$ is reversible (see \cite[Proposition~2.1.(2)]{khurana2020reversible} ). This is a contradiction since it is clear that $\alpha D_n(R) \alpha$ is not reversible (for example, $\alpha E_{12} \alpha \cdot \alpha E_{2n} \alpha \neq 0$ and $\alpha E_{2n} \alpha \cdot\alpha E_{12} \alpha =0$, where $E_{ij}$ is a matrix with $ij^{th}$ entry $1$ and other entries $0$).
		\\
		Conversely, let $R$ have only trivial idempotents.  Let  $P$ be an idempotent in $D_n(R)$ with $p$ as the diagonal entries.  Since $P^2=P$, a simple computation yields $p^2=p$. However, since $R$ has only trivial idempotents $p=0$ or $p=1$. If $p =1$ then $P$ is invertible (since $P=I+N$ for some nilpotent $N$). Also, since $P^2=P$ we have $P=I$. If $p=0$ then  $P$ is a nilpotent (since $P^{n}=0$). Since $P$ is also an idempotent, we arrive at $P=0$. Therefore $D_n(R)$ has only trivial idempotents and hence is i-reversible.\end{proof}
	 \begin{remark}\label{rr}
	     If $n=2$, then $D_n(R)=T(R,R)$.  This is i-reversible when $R$ has only trivial idempotents as discussed in Theorem \ref{P4.14}.
	 \end{remark}
	 
\begin{cor}\label{VN}
Let $R$ be a ring and $n$ be a positive integer. If $R$ has only trivial idempotents then the ring $V_n(R)=\lbrace (a_{ij})\in T_n(R) \ |\  a_{ij} =a_{(i+1) (j+1)}\  \forall \ i,j =1,2,\cdots \ n-1\rbrace $ is i-reversible.
\end{cor}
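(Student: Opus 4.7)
The plan is to exhibit $V_n(R)$ as a unital subring of $D_n(R)$ and then quote Theorem \ref{P4.7} (together with Remark \ref{rr}) to conclude that $V_n(R)$ inherits i-reversibility.

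First I would verify the inclusion $V_n(R)\subseteq D_n(R)$. The defining relation $a_{ij}=a_{(i+1)(j+1)}$ for all admissible $i,j$, specialized to $i=j$, gives $a_{11}=a_{22}=\cdots=a_{nn}$, which is precisely the condition defining $D_n(R)$. Next I would check that $V_n(R)$ is a unital subring of $T_n(R)$: the identity matrix obviously satisfies the constant-diagonal condition, closure under addition and subtraction is immediate, and closure under multiplication is the standard fact that the product of two upper triangular Toeplitz matrices is again upper triangular Toeplitz (a direct computation of $(AB)_{ij}$ versus $(AB)_{(i+1)(j+1)}$ using $A,B\in V_n(R)$).

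For $n\geq 3$, Theorem \ref{P4.7} gives that $D_n(R)$ is i-reversible since $R$ has only trivial idempotents; for $n=2$, Remark \ref{rr} together with Theorem \ref{P4.14} supplies the same conclusion; for $n=1$ we have $V_1(R)=R$, and i-reversibility is automatic because any non-zero idempotent of $R$ must equal $1$, forcing $ab=1$ and hence $(ba)^2=b(ab)a=ba$. It then only remains to observe that i-reversibility passes to unital subrings: if $a,b\in V_n(R)$ and $ab$ is a non-zero idempotent in $V_n(R)$, then the same is true in $D_n(R)$, so by i-reversibility of $D_n(R)$, $ba$ is an idempotent in $D_n(R)$; since $ba\in V_n(R)$, it is also an idempotent there.

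There is essentially no obstacle in this argument — the result is a direct corollary of Theorem \ref{P4.7} once the inclusion $V_n(R)\subseteq D_n(R)$ is noted. The only mild bookkeeping lies in separating out the small cases $n=1,2$, which are disposed of by Remark \ref{rr} and the trivial observation above, and in checking that i-reversibility is inherited by unital subrings (a one-line verification from the definition).
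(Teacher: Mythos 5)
Your proposal is correct and follows essentially the same route as the paper: observe $V_n(R)\subseteq D_n(R)$ (with the cases $n=1,2$ handled separately via Remark \ref{rr} and Theorem \ref{P4.14}), invoke Theorem \ref{P4.7} for $n\geq 3$, and use the fact that i-reversibility passes to subrings. The extra details you supply (closure of upper triangular Toeplitz matrices under multiplication, the one-line inheritance argument) are correct and merely make explicit what the paper leaves implicit.
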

\begin{proof}
The result is trivial for $n=1$. For $n=2$, we have $V_n(R)=T(R,R)$. Therefore, by Theorem \ref{P4.14}, $V_n(R)$ is   i-reversible in this case. For $n \geq 3$, $V_n(R)$ is a subring of $D_n(R)$. Since $R$ has only trivial idempotents, by Theorem \ref{P4.7}, $D_n(R)$ is i-reversible. So  $V_n(R)$ is i-reversible.
\end{proof}
\begin{remark}
The converse of the above corollary is not true. For instance, if $R$ is a reduced ring containing a non-trivial idempotent, then by \cite[Theorem~2.5]{kim2003extensions}, we have $R[x]/(x^n)$ is reversible and hence, i-reversible.
It is not hard to see that for each positive integer $n$,  $V_n(R) \cong R[x]/(x^n)$. 
\end{remark}
\begin{theorem}
Let $R$ be a ring and let $n$ be a positive integer greater than or equal to $3$. The ring $T(D_n(R),D_n(R))$ is i-reversible if and only if $R$ has only trivial idempotents. 
\end{theorem}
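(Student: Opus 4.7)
The plan is to bootstrap on the already established Theorems \ref{P4.7} and \ref{P4.14}: the first characterizes when $D_n(R)$ has only trivial idempotents, while the second upgrades the ``only trivial idempotents'' property from a ring $S$ to its trivial extension $T(S,S)$. The entire argument should then reduce to a short chain of implications with no new computation.

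For the forward direction, I would first note that if $T(D_n(R), D_n(R))$ is i-reversible, then the subring $D_n(R)$ (identified with the ``diagonal'' copy of $R$ inside $T(D_n(R),D_n(R))$) is also i-reversible, as recorded in item (1) of the first remark of Section \ref{S2}. Since $n \geq 3$, Theorem \ref{P4.7} then applies and yields that $R$ has only trivial idempotents.

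For the converse, I would observe that the proof of Theorem \ref{P4.7}, rather than only its statement, delivers a strictly stronger conclusion: whenever $R$ has only trivial idempotents, every idempotent of $D_n(R)$ is either $0$ or $I$. Having noted this, I would apply Theorem \ref{P4.14} with $D_n(R)$ in place of $R$; this instantly gives that $T(D_n(R), D_n(R))$ has only trivial idempotents, and in particular is i-reversible.

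I do not anticipate a genuine obstacle; the only step that warrants care is the invocation of Theorem \ref{P4.14} in the converse, since one needs the stronger fact (from the proof of Theorem \ref{P4.7}) that $D_n(R)$ has only trivial idempotents, rather than merely that it is i-reversible. A single sentence pointing this out, together with the observation that Theorem \ref{P4.14} is stated for an \emph{arbitrary} ring with only trivial idempotents, should close the argument cleanly.
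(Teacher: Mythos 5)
Your proposal is correct and follows essentially the same route as the paper: the "only if" direction is the contrapositive of the paper's argument (i-reversibility passes to the subring $D_n(R)$, then apply Theorem \ref{P4.7}), and the "if" direction uses the fact, extracted from the proof of Theorem \ref{P4.7}, that $D_n(R)$ has only trivial idempotents. The only cosmetic difference is that the paper reaches the "if" direction by first noting $D_n(R)$ is abelian and not reversible and invoking Theorem \ref{THM!!!!!}, whereas you go straight to Theorem \ref{P4.14}, which is the half of Theorem \ref{THM!!!!!} actually being used.
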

\begin{proof}
Suppose $R$ has only trivial idempotents then $D_n(R)$ is abelian (see \cite[Corollary 3.5]{Characterizations}). However, $D_n(R)$ is not reversible (see \cite[Example 1.3]{kim2003extensions} and \cite[ Example 1.5]{kim2003extensions}). Therefore by Theorem \ref{THM!!!!!}, $T(D_n(R),D_n(R))$ is i-reversible.\\
Conversely, suppose $R$ has a non-trivial idempotent. Then by Theorem \ref{P4.7}, $D_n(R)$ is not i-reversible, and so, $T(D_n(R),D_n(R))$ is not i-reversible.
\end{proof}	
We now ask the following question.
 \begin{ques}
Given any positive integer $n \geq 3$ and any ring $R$ with only trivial idempotents, can one say $D_n(R)$ is a maximal i-reversible subring of $T_n(R)$?
\end{ques}
We answer this question in the affirmative if $n \geq 5$  and in the negative if  $n=3$ or $n=4$, when $R$ is a field. Furthermore, for the cases $n=3,4$ and a field $R$, we provide a suitable maximal i-reversible subring of $T_n(R)$ that strictly contains $D_n(R)$.

We first prove the following result which is valid for any ring $R$.
\begin{theorem} \label{TH3.4}
Let $R$ be any ring and $n \geq 5$. Suppose $S$ is a ring such that $D_n(R) \subset S \subseteq T_n(R)$ and $S$ has a non-trivial idempotent. Then $S$ is not i-reversible.
\end{theorem}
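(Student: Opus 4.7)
The plan is to extract from the given non-trivial idempotent $P \in S$ a non-trivial \emph{diagonal} idempotent $D \in S$, and then either invoke Theorem~\ref{P4.7} or exhibit a direct failure of reversibility in the corner ring $DSD$. First I would write $P = D + N$, where $D$ is the diagonal part of $P$ and $N$ is strictly upper triangular. Since $N$ has all diagonal entries zero it lies in $D_n(R) \subseteq S$, so $D = P - N \in S$. Matching diagonal parts in $P^2 = P$ gives $D^2 = D$, and non-triviality of $P$ forces $D$ to be non-trivial as well: if $D = 0$ then $P = N$ is a nilpotent idempotent, hence $0$; if $D = I$ then $P = I + N$ is a unipotent idempotent, hence $I$.

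I would then split into two cases. If $R$ has a non-trivial idempotent, then by Theorem~\ref{P4.7} $D_n(R)$ is not i-reversible, and any pair $a,b \in D_n(R) \subseteq S$ witnessing this also witnesses non-i-reversibility of $S$. Otherwise $R$ has only trivial idempotents, in which case the diagonal entries of $D$ all lie in $\{0, 1\}$. Let $I_1 = \{i : D_{ii} = 1\}$ and $k = |I_1|$; non-triviality of $D$ gives $1 \leq k \leq n - 1$, and since $n \geq 5$ we have $\max(k, n - k) \geq 3$, so after possibly replacing $D$ by the non-trivial idempotent $I - D \in S$ we may assume $k \geq 3$.

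Now choose three indices $i_1 < i_2 < i_3$ in $I_1$, and set $e_1 := E_{i_1 i_2}$ and $e_2 := E_{i_2 i_3}$. These matrix units are strictly upper triangular, so they lie in $D_n(R) \subseteq S$; since all their indices lie in $I_1$, one checks that $D e_\ell D = e_\ell$, so $e_1, e_2 \in DSD$. A direct computation gives $e_2 e_1 = 0$ and $e_1 e_2 = E_{i_1 i_3} \neq 0$, so $DSD$ is not reversible. If $S$ were i-reversible, then since $D$ is a non-zero idempotent, \cite[Proposition~2.1(2)]{khurana2020reversible} would force $DSD$ to be reversible, a contradiction. The only subtle point is that $R$ is arbitrary, so the diagonal entries of $D$ could a priori be exotic idempotents of $R$; routing that case through Theorem~\ref{P4.7} avoids having to analyse the corner ring in that situation. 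The hypothesis $n \geq 5$ is used solely to guarantee that $\max(k, n-k) \geq 3$, producing the three indices needed for the witness.
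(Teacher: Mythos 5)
Your proposal is correct and follows essentially the same route as the paper: reduce to the case where $R$ has only trivial idempotents via Theorem~\ref{P4.7}, subtract the strictly upper triangular part (which lies in $D_n(R)\subseteq S$) to land a non-trivial diagonal idempotent in $S$, pass to a complementary idempotent if needed so that at least three diagonal $1$'s are available (this is exactly where $n\geq 5$ enters, as in the paper), and then exhibit matrix units in the corner ring violating reversibility, contradicting \cite[Proposition~2.1(2)]{khurana2020reversible}. The only differences are cosmetic (you use the first three indices where the paper uses $i_1,i_2,i_k$, and you phrase the $k\leq 2$ case as $\max(k,n-k)\geq 3$).
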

\begin{proof}
If $R$ has a non-trivial idempotent, then by Theorem \ref{P4.7}, $D_n(R)$ itself is not i-reversible and so $S$ is not i-reversible. So it is enough to prove the result when $R$ is a ring with only trivial idempotents. Let $A=[a_{ij}]$ be a non-trivial idempotent in $S$. Let $A'= A-\text{diag}(A)$, where $\text{diag}(A)$ is the diagonal matrix with diagonal entries $a_{ii}$. The matrix $A' \in D_n(R)$, and so $A' \in S$. Since $S$ is a ring, we have $\text{diag}(A) \in S$. The fact that $A$ is an idempotent in $S$ implies each $a_{ii}=0 \ \text{or} \ 1$. Moreover, since $A$ is a non-trivial idempotent, there exists positive integers $1 \leq i_1 < i_2 < \cdots < i_k \leq n $ such that $A-A'= E_{i_1i_1}+E_{i_2i_2}+\cdots +E_{i_ki_k} \neq I$. As $n \geq 5$, we claim that there is at least one non-trivial idempotent of the form  $E_{i_1i_1}+E_{i_2i_2}+\cdots +E_{i_ki_k} \in S$, for which $k\geq 3$. If $k=1$, then $E_{i_1i_1}\in S$, and in this case, $I-E_{i_1i_1} \in S$ will be the required idempotent. If $k=2$, then $E_{i_1i_1}+E_{i_2i_2}\in S$, and in this case, $I-(E_{i_1i_1}+E_{i_2i_2}) \in S$ will be the required idempotent.\\
\noindent Now consider $\alpha = E_{i_1i_1}+E_{i_2i_2}+\cdots +E_{i_ki_k} \in S$, where $k \geq 3$. Then $\alpha \cdot E_{i_1i_2}\cdot \alpha = E_{i_1i_2}$ and  $\alpha \cdot E_{i_2i_k}\cdot \alpha = E_{i_2i_k}$. It is clear that $E_{i_2i_k}\cdot E_{i_1i_2}= 0$ and $E_{i_1i_2}\cdot E_{i_2i_k}=E_{i_1i_k}$. Hence $\alpha S\alpha$ is not reversible. Therefore by \cite[Proposition~2.1.(2)]{khurana2020reversible}, $S$ is not i-reversible.
\end{proof}
\begin{remark}\label{R3.6}
When $n=4$, using the same argument as in the above theorem, one can easily see that if $S$ contains an idempotent of the form $E_{i_1i_1}+E_{i_2i_2}+\cdots +E_{i_ki_k}$ where $k=1$ or $k=3$, then $S$ is not i-reversible.
\end{remark}
\begin{theorem}\label{TH3.7}
Let $F$ be a field and $n\geq 3$. Let $S$ be a subring of $T_n(F)$ that strictly contains $D_n(F)$. Then $S$ contains a non-trivial idempotent.
\end{theorem}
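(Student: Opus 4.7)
The plan is to exhibit a diagonal matrix in $S$ with not-all-equal diagonal entries, and then manufacture a non-trivial idempotent by evaluating a suitable interpolating polynomial at it.

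First, I would pick any $A = (a_{ij}) \in S \setminus D_n(F)$ and split it as $A = D + N$, where $D = \mathrm{diag}(a_{11}, \ldots, a_{nn})$ and $N$ is the strictly upper triangular part of $A$. Since $N$ has zero diagonal, $N \in D_n(F) \subseteq S$, and therefore $D = A - N \in S$. Because $A \notin D_n(F)$, the scalars $a_{11}, \ldots, a_{nn}$ are not all equal; let $c_1, \ldots, c_k$ be the distinct values they take, so $k \geq 2$.

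Next, I would observe that $D_n(F) \subseteq S$ contains every scalar matrix $\lambda I$ for $\lambda \in F$, so $S$ is closed under arbitrary $F$-linear combinations of powers of $D$. Equivalently, $F[D] \subseteq S$. Since $F$ is a field and $c_1, \ldots, c_k$ are distinct elements of $F$, Lagrange interpolation yields a polynomial $p(x) \in F[x]$ with $p(c_1) = 1$ and $p(c_j) = 0$ for $2 \leq j \leq k$. Then $p(D)$ is a diagonal matrix whose $(i,i)$-entry is $1$ whenever $a_{ii} = c_1$ and $0$ otherwise; by the choice of $k \geq 2$, both values occur, so $p(D) \in S$ is a non-trivial idempotent.

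There is not really a serious obstacle here; the only point that needs a moment of care is the observation that $F \cdot I \subseteq D_n(F)$, which is what permits us to form arbitrary $F$-polynomials in $D$ inside $S$ and thus apply Lagrange interpolation. Once this is in place, the construction is immediate and works uniformly for every $n \geq 3$.
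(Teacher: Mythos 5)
Your proof is correct and follows essentially the same route as the paper's: both extract the diagonal matrix $D=\mathrm{diag}(A)$ from some $A\in S\setminus D_n(F)$ by subtracting the strictly upper triangular part (which lies in $D_n(F)\subseteq S$), and both obtain the non-trivial idempotent as an $F$-polynomial in $D$. The only cosmetic difference is that you produce the idempotent directly via Lagrange interpolation, whereas the paper reaches the same spectral idempotents $E^{(m)}$ through a dimension count identifying $\mathrm{span}\{I,B,\dots,B^{k-1}\}$ with $\mathrm{span}\{E^{(1)},\dots,E^{(k)}\}$.
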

\begin{proof}
 Let $A \in S \setminus D_n(F) $ and $B= \text{diag}(A)$ be the corresponding diagonal matrix. By the same arguments as in Theorem \ref{TH3.4}, $B\in S$. Let $b_1,b_2,\cdots,b_k$ be the distinct diagonal entries of $B$. This implies that the minimal polynomial of $B$ is of degree $k$. Therefore, the set  $\lbrace I, B, B^2, \cdots, B^{k-1} \rbrace$ is a linearly independent subset of $S$. Now, for each $m$ such that $1 \leq m \leq k$, let $E^{(m)}=[e^{(m)}_{ij}]$ be the diagonal matrix with $e^{(m)}_{ii}=$ 
 $\begin{cases}
1 ,\ \text{if} \  B_{ii}=b_m\\ 
0, \ \text{if} \ B_{ii} \neq b_m
\end{cases}$, where $B_{ii}$ is the $ii^{th}$ entry of $B$. It is clear that the set  $\lbrace E^{(1)},E^{(2)},\cdots, E^{(k)} \rbrace$ is linearly independent. Let $S'=\text{span}\lbrace E^{(1)},E^{(2)},\cdots, E^{(k)}\rbrace$. It is not hard to see that the set $\lbrace I,B,B^2,\cdots , B^{k-1}\rbrace \subset S'$, and so, $\text{span}\lbrace I,B,B^2,\cdots , B^{k-1}\rbrace \subseteq S'$. As both $\text{span}\lbrace I,B,B^2,\cdots , B^{k-1}\rbrace $ and $S'$ are of dimension $k$ as vector spaces over $F$, we can conclude that $\text{span}\lbrace I,B,B^2,\cdots , B^{k-1}\rbrace = S'$. This implies $S'$ is a subset of $S$. Clearly each $E^{(m)} \in S$ is a non-trivial idempotent.
\end{proof}
\begin{theorem}\label{TH3.8}
Let $F$ be a field and $n \geq 5$. Then $D_n(F)$ is a maximal i-reversible subring of $T_n(F)$.
\end{theorem}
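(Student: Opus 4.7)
The plan is to combine the two structural theorems \ref{TH3.7} and \ref{TH3.4} that have just been established, together with the i-reversibility of $D_n(F)$ itself (which is Theorem \ref{P4.7}, since a field has only trivial idempotents).

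First I would verify that $D_n(F)$ is indeed i-reversible: since $F$ is a field it has only the idempotents $0$ and $1$, so Theorem \ref{P4.7} applies directly (the hypothesis $n \geq 5$ is stronger than the $n \geq 3$ required there). Hence $D_n(F)$ is at least a candidate for a maximal i-reversible subring of $T_n(F)$.

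Next I would handle the maximality. Let $S$ be any subring of $T_n(F)$ with $D_n(F) \subsetneq S \subseteq T_n(F)$. By Theorem \ref{TH3.7}, $S$ must contain a non-trivial idempotent (this is where the field hypothesis is used, through the diagonal decomposition and the spanning argument over $F$). Having produced a non-trivial idempotent inside $S$, I would then invoke Theorem \ref{TH3.4}, whose hypotheses are exactly that $n \geq 5$, that $S$ strictly contains $D_n(R)$, and that $S$ has a non-trivial idempotent. That theorem yields immediately that $S$ is not i-reversible.

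Putting these together, no proper overring of $D_n(F)$ inside $T_n(F)$ is i-reversible, while $D_n(F)$ itself is; this is exactly the statement of maximality. There is no real obstacle here: the two preceding theorems have already done all the work, and the argument is essentially a one-line combination. The only point to be careful about is making sure the hypothesis $n \geq 5$ is invoked at the step where \ref{TH3.4} is applied (Remark \ref{R3.6} warns that the analogous conclusion fails for $n=4$), and that Theorem \ref{TH3.7} is applied in the form where $R$ is a field so that the linear-independence/spanning reasoning goes through.
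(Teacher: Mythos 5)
Your proposal is correct and follows exactly the same route as the paper: i-reversibility of $D_n(F)$ from Theorem \ref{P4.7}, existence of a non-trivial idempotent in any strictly larger subring from Theorem \ref{TH3.7}, and non-i-reversibility of that subring from Theorem \ref{TH3.4}. Nothing is missing; the remarks about where the field hypothesis and $n\geq 5$ enter are accurate.
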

\begin{proof}
By Theorem \ref{P4.7}, $D_n(F)$ is i-reversible. Let $S$ be a subring of $T_n(F)$ such that $D_n(F) \subsetneqq S$. We have to show that $S$ is not i-reversible. By Theorem \ref{TH3.4}, it is enough to show that $S$ contains a non-trivial idempotent. But this is true from Theorem \ref{TH3.7}. This completes the proof.
\end{proof}
The following examples illustrate that Theorem \ref{TH3.4} is not true if $n=3\  \text{or} \ 4$.
\begin{eg}\label{EG1}
Let $F$ be a field and 
$S_3(F)= \left\lbrace\begin{bmatrix}
a_{ij}
	\end{bmatrix} \in T_3(F)\mid  a_{11}=a_{22} \right\rbrace$. It is clear that $S_3(F)$ is a subring of $T_3(F) $  such that $D_3(F) \subsetneqq S_3(F)  $. It is also clear that $S_3(F)$ has non-trivial idempotents (for example $E_{11}+E_{22}$). We claim that $S_3(F)$ is i-reversible. Suppose $E=[e_{ij}]$ is a non-trivial idempotent in $S_3(F)$. This implies either $e_{11}=1$ and $e_{33}=0$ or $e_{11}=0$ and $e_{33}=1$. Further, the fact that $E^2=E$ will imply $e_{12}=0$. This will mean, $E$ is either of the form, $$E_1=\begin{pmatrix}
1&0&e_{13}\\
		0&1&e_{23}\\
		0&0&0
	\end{pmatrix} \qquad\text{or}\qquad E_2 = \begin{pmatrix}
0&0&e_{13}\\
		0&0&e_{23}\\
		0&0&1
 	\end{pmatrix}.$$ For $A,B\in S_3(F)$, suppose $AB$ is an idempotent of the form $E_1$, then the product of the first $2\times 2$ blocks of $A$ and $B$ is identity and 
and $a_{33}b_{33}=0$. This implies that the product of the first $2\times 2$ blocks of $B$ and $A$ is identity and 
and $b_{33}a_{33}=0$. So, $BA$ is again of the form $E_1$.  
Hence, $BA$ is an idempotent in $S_3(F)$. Again if $A$ and $B$ are such that $AB$ is an idempotent of the form $E_2$ then, the product of the first $2\times 2$ blocks of $A$ and $B$ is the zero matrix and 
 and $a_{33}b_{33}=1$. Now since $F$ is reduced, by \cite[Proposition~1.6]{kim2003extensions}, $T(F,F)$ is reversible. So, the product of the first $2\times 2$ blocks of $B$ and $A$ is the zero matrix. 
Also $a_{33}b_{33}=1$ implies $b_{33}a_{33}=1$. Therefore, $BA$ is a matrix of the form $E_2$. 
	Hence $BA$ is an idempotent in this case as well. This proves that $S_3(F)$ is i-reversible.
\end{eg}

\begin{eg}\label{EG2}
Let $F$ be a field and 
$S_4(F)= \left\lbrace\begin{bmatrix}
a_{ij}
	\end{bmatrix}\in T_4(F) \mid a_{11}=a_{22}\text{ and }a_{33}=a_{44} \right\rbrace$. It is clear that $S_4(F)$ is a subring of $T_4(F) $  such that $D_4(F) \subsetneqq S_4(F)  $, and that $S_4(F)$ has non-trivial idempotents (for example $E_{11}+E_{22}$). We claim that $S_4(F)$ is i-reversible. Suppose $E=[e_{ij}]$ is a non-trivial idempotent in $S_4(F)$. This implies either $e_{11}=1$ and $e_{33} =0$ or $e_{11}=0$ and $e_{33}=1$. Further, the fact that $E^2=E$ will imply $e_{12}=0$ and $e_{34}=0$. Therefore, $E$ is either of the form  $$E_1=\begin{pmatrix}
1&0&e_{13}&e_{14}\\
		0&1&e_{23}&e_{24}\\
		0&0&0&0\\
		0&0&0&0
	\end{pmatrix} \qquad\text{or}\qquad
	E_2=\begin{pmatrix}
0&0&e_{13}&e_{14}\\
		0&0&e_{23}&e_{24}\\
		0&0&1&0\\
		0&0&0&1
	\end{pmatrix}.$$     A similar argument as in the previous example will now show that $S_4(F)$ is i-reversible.
\end{eg}
\begin{remark}
The above examples stand valid even if the base ring is any reduced ring with trivial idempotents instead of a field. However, we need the base ring to be a field in order to establish the following results.
\end{remark}

\begin{theorem}\label{TRT}
Let $F$ be a field. Then $S_3(F)$ is a maximal i-reversible subring of $T_3(F)$.
\end{theorem}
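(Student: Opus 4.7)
The plan is to combine Example~\ref{EG1} with a short codimension argument. Example~\ref{EG1} already establishes that $S_3(F)$ is i-reversible, so to prove maximality it suffices to show that any subring $S$ with $S_3(F) \subsetneq S \subseteq T_3(F)$ must coincide with $T_3(F)$, which is not i-reversible by \cite[Corollary~3.2]{khurana2020reversible}.

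The first step is to promote $S$ from a subring to an $F$-vector subspace of $T_3(F)$. Every such $S$ contains $D_3(F)$, and in particular $fI \in S$ for every $f \in F$; multiplicative closure of $S$ then gives $fs = (fI)s \in S$ for each $s \in S$. This small observation is the crux, as it converts the problem into linear algebra.

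The second step is a direct dimension count. The ring $T_3(F)$ has $F$-dimension $6$, with basis $\{E_{ij} : 1 \leq i \leq j \leq 3\}$; the single linear constraint $a_{11} = a_{22}$ cuts $S_3(F)$ down to dimension $5$. Hence $S_3(F)$ is codimension one in $T_3(F)$, so any $F$-subspace strictly containing it must equal $T_3(F)$. Combining the two steps, $S = T_3(F)$, which is not i-reversible, completing the proof of maximality.

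There is essentially no obstacle here: once one notices that $S$ is forced to be an $F$-subspace, the maximality of $S_3(F)$ becomes a codimension-one fact. The substantive content has already been absorbed into Example~\ref{EG1} (the i-reversibility of $S_3(F)$) and into the cited non-i-reversibility of $T_3(F)$.
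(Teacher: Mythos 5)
Your proposal is correct and follows essentially the same route as the paper: both rely on Example~\ref{EG1} for i-reversibility and on the codimension-one count $\dim_F T_3(F) = \dim_F S_3(F) + 1$ to force $S = T_3(F)$. You merely spell out two points the paper leaves implicit, namely why $S$ is an $F$-subspace (via $fI \in D_3(F) \subseteq S$) and why $T_3(F)$ itself is not i-reversible.
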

\begin{proof}
Firstly, it is clear from Example \ref{EG1} that $S_3(F)$ is an i-reversible subring of $T_3(F)$. Now suppose $S$ is any subring of $T_3(F)$ that strictly contains $S_3(F)$. Then since $S$ is also a vector subspace of $T_3(F)$ and $\text{dim}_F(T_3(F))= \text{dim}_F(S_3(F))+1$, we can conclude that $S=T_3(F)$. Therefore, $S_3(F)$ is a maximal i-reversible subring of $T_3(F)$.
\end{proof}
\begin{remark}
From the above proof, we can actually conclude that $S_3(F)$ is a maximal subring of $T_3(F)$.
\end{remark}
\begin{theorem}\label{TRTT}
Let $F$ be a field. Then $S_4(F)$ is a maximal i-reversible subring of $T_4(F)$.
\end{theorem}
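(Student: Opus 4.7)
The plan is to combine the explicit analysis of idempotents in $S_4(F)$ from Example \ref{EG2} with a corner-ring argument, mirroring the strategy of Theorems \ref{TH3.4} and \ref{TH3.8}. Since $S_4(F)$ is already i-reversible by Example \ref{EG2}, the real content of the theorem is to show that any subring $S$ with $S_4(F) \subsetneq S \subseteq T_4(F)$ fails to be i-reversible.

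First, I will exploit the fact that $S_4(F)$ contains every strictly upper triangular matrix of $T_4(F)$. Consequently, for any $A \in S$ the difference $A - \text{diag}(A)$ lies in $S_4(F) \subset S$, so $\text{diag}(A) \in S$. Picking $A \in S \setminus S_4(F)$ and writing $D = \text{diag}(A) = \text{diag}(d_1,d_2,d_3,d_4)$, the failure of $A$ to lie in $S_4(F)$ forces $d_1 \neq d_2$ or $d_3 \neq d_4$. Without loss of generality I will assume $d_1 \neq d_2$; the case $d_3 \neq d_4$ is handled symmetrically.

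The key step is to extract a single matrix unit $E_{ii}$ inside $S$, and here the field hypothesis on $F$ becomes essential. Using Lagrange interpolation at the distinct values of $D$, there is a polynomial $p(x) \in F[x]$ with $p(D) = \sum_{i : d_i = d_1} E_{ii} \in S$. Multiplying this idempotent by $E_{11} + E_{22} \in S_4(F) \subset S$ and using that $d_2 \neq d_1$ excludes the index $2$ from the sum, I obtain $E_{11} \in S$.

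Finally, I set $\alpha = I - E_{11} \in S$, which is a non-trivial idempotent. Since $E_{23}, E_{34} \in S_4(F) \subset S$ are both fixed by conjugation with $\alpha$, they both lie in $\alpha S \alpha$; and a direct computation gives $E_{34} E_{23} = 0$ while $E_{23} E_{34} = E_{24} \neq 0$, so $\alpha S \alpha$ is not reversible. Appealing to \cite[Proposition~2.1.(2)]{khurana2020reversible}, $S$ cannot be i-reversible. The symmetric case $d_3 \neq d_4$ is treated identically, extracting $E_{44} \in S$, taking $\alpha = I - E_{44}$, and using the pair $E_{12}, E_{23}$ (which satisfy $E_{23} E_{12} = 0$ and $E_{12} E_{23} = E_{13} \neq 0$). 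The one delicate point I anticipate is the extraction of a single $E_{ii}$ from an arbitrary diagonal matrix $D \in S \setminus S_4(F)$; this is precisely where the field hypothesis enters through Lagrange interpolation, and all the remaining calculations are routine matrix-unit manipulations.
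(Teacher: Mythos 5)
Your proposal is correct, and it rests on the same three pillars as the paper's proof: extracting $\mathrm{diag}(A)\in S$ from some $A\in S\setminus S_4(F)$, manufacturing diagonal idempotents from it via polynomials over $F$ (Lagrange interpolation, which is legitimate here since $F\cdot I\subseteq S_4(F)\subseteq S$ makes $S$ an $F$-subspace), and killing i-reversibility through a non-reversible corner ring $\alpha S\alpha$ via \cite[Proposition~2.1.(2)]{khurana2020reversible}. Where you genuinely improve on the paper's organization is in collapsing its case analysis: the paper splits into four cases according to the multiset of diagonal entries of $\mathrm{diag}(A)$ (three or four distinct values; three equal and one different; the two ``interleaved'' patterns $b_{11}=b_{33}$, $b_{22}=b_{44}$ and $b_{11}=b_{44}$, $b_{22}=b_{33}$), reducing each to Remark \ref{R3.6}. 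You instead observe that $A\notin S_4(F)$ forces $d_1\neq d_2$ or $d_3\neq d_4$, and that multiplying the Lagrange idempotent $\sum_{i:\,d_i=d_1}E_{ii}$ by $E_{11}+E_{22}\in S_4(F)$ (respectively by $E_{33}+E_{44}$) isolates the single matrix unit $E_{11}$ (respectively $E_{44}$) in one stroke --- a trick the paper uses only in its Case III. This yields a uniform two-case argument, with the complement $I-E_{11}$ or $I-E_{44}$ playing exactly the role of the $k=3$ idempotents in Remark \ref{R3.6}; the final computation $E_{34}E_{23}=0$, $E_{23}E_{34}=E_{24}\neq 0$ (and its mirror) is identical in substance to the paper's. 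Both approaches prove the same statement with the same tools; yours is shorter and less case-prone, while the paper's makes the dependence on Remark \ref{R3.6} and Theorem \ref{TH3.7} explicit.
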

\begin{proof}
It is clear from Example \ref{EG2} that $S_4(F)$ is i-reversible. Let $S$ be a subring of $T_4(F)$ such that $S$ strictly contains $S_4(F)$. Let $A \in S\setminus S_4(F)$ and $B=\text{diag(A)}$. Let $b_{ii}$'s denote the diagonal entries of $B$. We now consider the following cases.\\
\textbf{Case I:} If $B$ has $3$ or $4$ distinct diagonal entries, then by the same arguments as in the proof of Theorem \ref{TH3.7} and Remark \ref{R3.6}, $S$ is not i-reversible. \\
\textbf{Case II:} If $B$ is such that $b_{11}=b_{22}=b_{33}=b $ and $b_{44}=a$, then clearly $B'= B-aI \in S$. This implies $(b-a)^{-1}I\cdot B'=E_{11}+E_{22}+E_{33} \in S$. So, by Remark \ref{R3.6}, $S$ is not i-reversible. A similar argument can be applied if $B$ is such that $b_{11}=b_{22}=b_{44}=b $ and $b_{33}=a$, $b_{11}=b_{33}=b_{44}=b $ and $b_{22}=a$ or $b_{22}=b_{33}=b_{44}=b $ and $b_{11}=a$.

\noindent \textbf{Case III:} If  $B$ is such that $b_{11}=b_{33}=a $ and $b_{22}=b_{44}=b$, then by similar arguments as in the previous case, we can conclude that $E
	_{11}+E_{33} \in S$. As $S$ contains $S_4(F) $, $E_{11}+E_{22} \in S$. So, $E_{11}=(E
	_{11}+E_{33})\cdot (E_{11}+E_{22}) \in S$. Therefore, by Remark \ref{R3.6}, $S$ is not i-reversible.\\
\noindent \textbf{Case IV:} If $B$ is such that $b_{11}=b_{44}=a $ and $b_{22}=b_{33}=b$, then by similar arguments as in the previous cases, we can conclude that $E
	_{11}+E_{44} \in S$. This implies $E_{11} \in S$. Again by Remark \ref{R3.6}, $S$ is not i-reversible.	
	
\end{proof}

	\section{Dorroh Extensions and Nagata Extensions.}
	In this section, we discuss the i-reversibility of Dorroh Extensions and Nagata Extensions.
	\begin{definition}
		Let $R$ be an algebra over a commutative ring $S$. The Dorroh extension of $R$ by $S$ is the ring $R \times S$ with the following operations:
		\begin{align*}
			&(r_1,s_1)+(r_2,s_2) = (r_1+r_2,s_1+s_2) \\
			&(r_1,s_1)\cdot(r_2,s_2) = (r_1r_2+s_1r_2+s_2r_1,s_1s_2).
		\end{align*}
		The Dorroh extension of $R$ by $S$ is a ring with unity $(0,1)$.
	\end{definition}
	
	\begin{theorem}\label{P4.19}
		Let $S$ be a commutative ring and $R$ be an algebra (not necessarily unital) over $S$ containing a non-zero central idempotent $e$. Let $D$ be the Dorroh extension of $R$ by $S$. Then $D$ is i-reversible if and only if $D$ is reversible.
	\end{theorem}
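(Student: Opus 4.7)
The plan is to split the biconditional into two implications. One direction is automatic: every reversible ring is i-reversible, since the equivalent formulation of reversibility stated in the introduction ($ab$ idempotent forces $ba$ idempotent) is strictly stronger than what i-reversibility demands. So the entire content of the theorem lies in showing that i-reversibility of $D$ implies reversibility of $D$.

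For this non-trivial direction, the plan is to produce a non-trivial central idempotent in $D$ and then invoke \cite[Proposition~2.1.(4)]{khurana2020reversible} (the same ingredient used in the proof of Theorem~\ref{TH!}), which asserts that any i-reversible ring containing a non-trivial central idempotent must be reversible. The natural lift of the given idempotent $e \in R$ is $\varepsilon = (e, 0) \in D$.

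The key steps, in order, are the following. First, I would check $\varepsilon^2 = \varepsilon$, a direct computation from the Dorroh multiplication using $e^2 = e$. Second, I would check that $\varepsilon$ is non-trivial, using $e \neq 0$ to rule out $\varepsilon = 0_D$ and using $1_D = (0,1)$ to rule out $\varepsilon = 1_D$ (the second coordinates differ). Third, I would verify $\varepsilon$ is central in $D$ by showing that both $\varepsilon \cdot (r,s)$ and $(r,s) \cdot \varepsilon$ collapse to $(er + se, 0) = (re + se, 0)$; this is the only spot where centrality of $e$ in $R$ is actually used. Once these three verifications are in place, \cite[Proposition~2.1.(4)]{khurana2020reversible} immediately delivers the reversibility of $D$.

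I do not foresee a substantive obstacle here; the argument is essentially a one-line observation packaged with a routine verification, namely that the Dorroh construction faithfully transports a non-trivial central idempotent of $R$ into $D$ without colliding with the unit. The one minor pitfall to keep in mind is that $1_D = (0,1)$ rather than something involving the first coordinate, so $(e,0)$ is automatically distinct from the identity of $D$, which is what makes the non-triviality step work without any further hypothesis on $e$.
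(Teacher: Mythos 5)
Your proposal is correct and follows essentially the same route as the paper: both arguments exhibit $(e,0)$ as a non-trivial central idempotent of $D$ (non-trivial because $e\neq 0$ and $1_D=(0,1)$) and then invoke \cite[Proposition~2.1.(4)]{khurana2020reversible} to conclude reversibility, with the converse being immediate. Your write-up is, if anything, slightly more explicit than the paper's about verifying idempotency and non-triviality.
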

	\begin{proof}
		Let $D$ be i-reversible. 
		Clearly, $(e,0)$  is a non-trivial idempotent in $D$. Now, $(r,s)(e,0)=(re+se,0)= (e,0)(r,s) $, which makes $(e,0)$ a central idempotent. Therefore, $D$ is reversible (see \cite[Proposition~2.1.(4)]{khurana2020reversible}). The reverse implication is clear.
	\end{proof}
	
	The Dorroh extension is usually used to embed a non-unital ring into a  unital ring. However, Alwis studied the Dorroh extension of $\mathbb{Z}$ over $\mathbb{Z}$ (see \cite{de1994ideal}). Cannon and Neuerburg \cite{cannon} generalized these results to the Dorroh extension of $R$ over $\mathbb{Z}$ where $R$ is any unital ring (and hence a algebra over $\mathbb{Z}$). More recently, Alhribat et al. further generalized these results to Dorroh extensions of a unital algebra $R$ over a unital ring $S$ (see \cite{ALH} for details). The following theorem gives a necessary and sufficient condition for the i-reversibility of the Dorroh extension of a unital algebra $R$ over a commutative unital ring $S$.
	
	\begin{theorem}\label{NSD}
	Let $R$ be a unital algebra over a commutative unital ring $S$ and $D$ be the Dorroh extension of $R$ by $S$. The ring $D$ is i-reversible if and only if $R$ is reversible. 
	\end{theorem}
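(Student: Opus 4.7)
The plan is to establish a ring isomorphism $D\cong R\times S$ and then combine it with Theorem \ref{P4.19} and the standard behavior of reversibility under direct products.

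To begin, since $R$ is unital, $1_R$ is a non-zero central idempotent in $R$, so Theorem \ref{P4.19} applies and tells us that $D$ is i-reversible if and only if $D$ is reversible. It therefore suffices to prove: $D$ is reversible if and only if $R$ is reversible.

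For the key structural step, I would define $\Psi\colon D\to R\times S$ by $\Psi(r,s)=(r+s\cdot 1_R,\, s)$. Additivity is immediate, and bijectivity follows because $(r',s')\mapsto(r'-s'\cdot 1_R,\, s')$ is a two-sided inverse. Multiplicativity reduces to the identity
\[
(r_1+s_1\cdot 1_R)(r_2+s_2\cdot 1_R)=r_1r_2+s_1r_2+s_2r_1+s_1s_2\cdot 1_R,
\]
which holds because scalars from $S$ lie in the center of $R$ by definition of an $S$-algebra; the right-hand side is precisely $\Psi$ applied to the Dorroh product $(r_1r_2+s_1r_2+s_2r_1,\, s_1s_2)$. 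The unity $(0,1)\in D$ maps to $(1_R,1_S)$, so $\Psi$ is a unital ring isomorphism.

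With $D\cong R\times S$ in hand, I would finish using the elementary fact that $A\times B$ is reversible if and only if both $A$ and $B$ are reversible (easily checked by testing pairs of the form $(a,0)$ and $(0,b)$). Since $S$ is commutative and therefore reversible, $R\times S$ is reversible precisely when $R$ is, and chaining this with the reduction from Theorem \ref{P4.19} yields the claimed equivalence. The only real obstacle I anticipate is careful bookkeeping of the $S$-algebra axiom while verifying that $\Psi$ respects multiplication; once that identity is in place, the remaining steps are routine.
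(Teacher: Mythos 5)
Your proposal is correct and follows essentially the same route as the paper: both rest on the isomorphism $D\cong R\times S$ together with the fact that a direct product with a commutative factor is (i-)reversible exactly when the other factor is reversible. The only cosmetic difference is that you construct the isomorphism $\Psi(r,s)=(r+s\cdot 1_R,\,s)$ explicitly and route the ``i-reversible iff reversible'' reduction through Theorem \ref{P4.19}, whereas the paper cites \cite[Proposition 1.3]{ALH} for the isomorphism and invokes the product criterion for i-reversibility directly.
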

\begin{proof}
Since $S$ is a commutative ring with unity and $R$ is a unital algebra over $S$, by \cite[Proposition 1.3]{ALH}, we have $D \cong S\times R$.  We know that $S \times R$ is i-reversible if and only if $S$ and $R$ are reversible. As $S$ is commutative, we further have $S \times R$ is i-reversible if and only if $R$ is reversible.
\begin{remark}
Theorem \ref{NSD} can be thought of as a particular case of Theorem \ref{P4.19} where $e$ is the unity in $R$. Therefore, the condition that $R$ and $S$ are reversible in Theorem \ref{NSD} is also a necessary and sufficient condition for the reversibility of $D$.
\end{remark}
\end{proof}	
	\begin{definition}
		Let $R$ be a commutative ring, $M$ be an $R$-module and $\sigma$ be an endomorphism of $R$. Give
		$R\bigoplus M$ a (possibly non-commutative) ring structure with component-wise addition and multiplication defined by,  $$(r_{1}, m_{1}) \cdot (r_{2},m_{2})=
		\big(r_{1}r_{2}, \sigma(r_{1})m_{2}+r_{2}m_{1}\big)$$ where $r_{i}$ $\in$ $R$ and $m_{i}$ $\in$ $M$. This extension is called the Nagata	extension of $R$ by $M$ and $\sigma$.
	\end{definition}
	
	\begin{theorem}\label{P4.21}
		Let $R$ be a commutative ring and $N$ be the Nagata Extension of $R$ by $M$ and $\sigma$. If $R$ has only trivial idempotents then $N$ is i-reversible.
	\end{theorem}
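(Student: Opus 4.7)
The plan is to mirror the strategy of Theorem \ref{P4.14}: I will show that the hypothesis on $R$ forces $N$ itself to have only the trivial idempotents $(0,0)$ and $(1,0)$. Once that is established, i-reversibility is immediate, because in a ring with only trivial idempotents any non-zero idempotent is the unit, and if $ab=1$ then $(ba)^2 = b(ab)a = ba$ is automatically an idempotent.

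To carry this out, I would take an arbitrary idempotent $\alpha = (r,m)$ in $N$ and expand $\alpha^2 = \alpha$ using the Nagata multiplication. Comparing first coordinates gives $r^2 = r$, so $r$ is an idempotent of $R$, which by hypothesis must be $0$ or $1$. Comparing second coordinates gives the relation
\[
\sigma(r)m + r m \;=\; m.
\]

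I would then plug in the two possibilities for $r$. If $r = 0$, the relation becomes $\sigma(0)m = m$, that is $0 = m$, forcing $m = 0$. If $r = 1$, using that $\sigma$ is a ring endomorphism of the unital ring $R$ so $\sigma(1) = 1$, the relation collapses to $m + m = m$, which in the abelian group $M$ forces $m = 0$. In both cases $\alpha \in \{(0,0),(1,0)\}$, so $N$ has only trivial idempotents and is therefore i-reversible.

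The argument is essentially routine and no substantial obstacle arises; the only subtlety is the standing convention that $\sigma$ preserves the unit of $R$, which is also precisely what guarantees that $(1,0)$ is a multiplicative identity for the Nagata construction and makes the $r=1$ case work. Notably, $\sigma$ plays no deeper role in the proof beyond fixing~$1$, which is why the statement holds for every choice of endomorphism.
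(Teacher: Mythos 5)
Your argument has a real gap: you assume $\sigma(1)=1$, but the paper's definition of the Nagata extension only asks that $\sigma$ be an endomorphism of $R$, and the paper's own proof of Theorem \ref{P4.21} explicitly treats the case $\sigma(1)=0$ (note that $\sigma(1)$ is an idempotent of $R$, hence equal to $0$ or $1$ under the hypothesis). In that excluded case your key structural claim is false: if $\sigma(1)=0$, then for every $m\in M$ one has $(1,m)^2=(1,\sigma(1)m+m)=(1,m)$, so $N$ has many non-trivial idempotents and the reduction to ``rings with only trivial idempotents are i-reversible'' collapses. (The theorem itself is still true there, but it needs a different argument.) Your appeal to the ``standing convention'' is defensible only insofar as $N$ fails to be unital when $\sigma(1)=0$ and $M\neq 0$; the authors evidently do not rely on that escape hatch, since they carry the case through.

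For comparison, the paper does not try to classify the idempotents of $N$ at all. It takes $\alpha=(a_1,b_1)$, $\beta=(a_2,b_2)$ with $\alpha\beta$ a non-zero idempotent, deduces $a_1a_2\in\{0,1\}$ from the first coordinate, and splits on the value of $\sigma(a_1a_2)$. When $a_1a_2=1$ and $\sigma(a_1a_2)=0$, a direct computation shows $(\beta\alpha)^2=\beta\alpha$ immediately. When $a_1a_2=1$ and $\sigma(a_1a_2)=1$, the second-coordinate equation $\sigma(a_1)b_2+a_2b_1=0$ is multiplied through by $\sigma(a_2)$ and then by $a_1$ to produce $a_1b_2+\sigma(a_2)b_1=0$, giving $\beta\alpha=(1,0)$. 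If you add the hypothesis $\sigma(1)=1$ (equivalently, insist that $N$ be unital), your proof is complete and is in fact cleaner than the paper's, running exactly parallel to Theorem \ref{P4.14}; but as a proof of the statement as the paper intends it, the $\sigma(1)=0$ branch is missing.
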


	\begin{proof}
		Let  $\alpha \beta$ be an idempotent, where $\alpha$ = ($a_{1},b_{1}$), $\beta$ = ($a_{2},b_{2}$) $\in$ $N$. We have,\begin{align*}
			\alpha\beta=&(a_{1}a_{2},\sigma(a_{1})b_{2}+a_2b_1)\\
			(\alpha\beta)^2=&(a_{1}a_{2}a_{1}a_{2},\sigma(a_{1}a_{2})(\sigma(a_{1})b_{2}+a_2b_1)+a_{1}a_{2}(\sigma(a_{1})b_{2}+a_2b_1))\\
			\beta\alpha=&(a_{2}a_{1},\sigma(a_{2})b_{1}+a_1b_2)\\
			(\beta\alpha)^2=&(a_{2}a_{1}a_{2}a_{1},\sigma(a_{2}a_{1})(\sigma(a_{2})b_{1}+a_1b_2)+a_{2}a_{1}(\sigma(a_{2})b_{1}+a_1b_2))
		\end{align*}
		Comparing the first component of $\alpha \beta$ and $(\alpha \beta)^2$, we get that $a_{1}a_{2}$ is an idempotent in $R$.
		Hence, $a_{1}a_{2} = 0$ or $a_1a_2=1$. If $a_{1}a_{2} = 0$, we have $\sigma$($a_{1}a_{2})= 0$. This will imply $\alpha \beta = 0$ leaving nothing to prove. If $a_{1}a_{2} = 1$ and $\sigma$($a_{1}a_{2}) = 0$ then
		$\beta \alpha = (1,\sigma(a_2)b_1+a_1b_2) = (\beta\alpha)^2$.
		So, in this case $N$ is i-reversible. Finally, if $a_{1}a_{2} = 1$ and $\sigma$($a_{1}a_{2}) = 1$ then
		comparing the second component of $\alpha\beta$  and $(\alpha\beta)^2$, we have
		$\sigma$($a_1$)$b_2$+$a_2b_1 = 0$.
		Multiplying throughout by $\sigma$($a_2$), we get
		$b_2+\sigma(a_2)a_2b_1 = 0$. Further multiplying this equation by $a_1$, we get $b_2a_1+\sigma(a_2)b_1 = 0$.
		Therefore, $\beta \alpha = (1,0) =$ $(\beta\alpha)^2$ and $N$ is i-reversible.
	\end{proof}  
	\begin{remark}
		The converse of the above theorem is not true. For example if $\sigma=Id_R$, where $R$ is a commutative ring with non-trivial idempotents (say $\mathbb{Z}_6$) then the Nagata extension of $R$ by $R$ and $\sigma$ is just $T(R,R)$. The commutativity of $R$ will imply the commutativity of $T(R,R)$ and hence its i-reversibility. We further provide an example where $\sigma \neq Id_R$.
	\end{remark}
	\begin{eg}\label{P4.23}
		Let $D$ be a commutative domain of characteristic zero, and $R$ = $D \bigoplus D$ with component-wise addition and
		multiplication. Define $\sigma$ : $R$ $\rightarrow$ $R$  by $\sigma$($s, t$) = ($t, s$), then $\sigma$ is an automorphism of $R$. Since $D$ is a domain, the only idempotents in $R$ are $(0,0)$, $(1,0)$, $(0,1)$ and $(1,1)$. Let $N$ denote the Nagata extension of $R$ by $R$ and $\sigma$. Let $\alpha=(r_1,m_1)$,  $\beta=(r_2,m_2) \in N$ for some $r_1,m_1,r_2,m_2 \in R$ such that $\alpha \beta$ is a non-zero idempotent. Now , $(\alpha \beta)^2 = \alpha \beta$  implies  $$((r_1r_2)^2,\sigma(r_1r_2)(\sigma(r_1)m_2+r_2m_1)+r_1r_2(\sigma(r_1)m_2+r_2m_1)) = (r_1r_2,\sigma(r_1)m_2+r_2m_1)$$ and therefore, $r_1r_2$ is an idempotent in $R$. If $r_1r_2 =(0,0)$ it gives $(\alpha\beta)^2=0$ and hence $\alpha\beta =0$. This  contradicts the fact that $\alpha\beta$ is a non-zero idempotent. If $r_1r_2 =(1,0) \ \text{or} \  (0,1)$, a direct computation gives $\beta\alpha = (\beta \alpha)^2$. Finally, if $r_1r_2 = (1,1)$, letting $r_1=(r_1',\bar{r_1})$, $r_2=(r_2',\bar{r_2})$, $m_1=(m_1',\bar{m_1})$ and $m_2=(m_2',\bar{m_2})$ we get,
		\begin{align*} 
			\beta\alpha &= \big((1,1),(\bar{r_2}m_1'+r_1'm_2',r_2'\bar{m_1}+\bar{r_1} \bar{m_2}))\\   (\beta\alpha)^2 &= ((1,1),2(\bar{r_2}m_1'+r_1'm_2',r_2'\bar{m_1}+\bar{r_1} \bar{m_2})). 
		\end{align*}
		However, since  $\alpha \beta$ is an idempotent, comparing the second component of $\alpha \beta$ and $(\alpha \beta)^2$ gives
		\begin{align}
			\bar{r_1}m_2'+r_2'm_1' =0.\label{Eq4.1}
		\end{align}
		Multiplying (\ref{Eq4.1}) from the left side with $(\bar{r_2} r_1')$ yields
		\begin{align}
			r_1'm_2'+\bar{r_2}m_1' =0. \label{Eq4.2}
		\end{align}
		Again, multiplying (\ref{Eq4.1}) from the left side with $(r_2'\bar{r_1} )$ we get
		\begin{align}
			\bar{r_1}\bar{m_2}+r_2'\bar{m_1} = 0. \label{Eq4.3}
		\end{align}
		Adding (\ref{Eq4.2}) and (\ref{Eq4.3}), we get 
		\begin{align*}
			r_1'm_2'+\bar{r_2}m_1'+\bar{r_1}\bar{m_2}+r_2'\bar{m_1}=0.  
		\end{align*}
		Therefore, $(\beta\alpha)^2$ = $(\beta\alpha)$ and hence $N$ is i-reversible.

	\end{eg}

	\section{I-reversibility of Polynomial and Laurent Polynomial Rings.}
	In  \cite[Proposition~ 2.1(7)]{khurana2020reversible} and \cite[Example~1.10]{jung2019reversibility}, the authors provide an example of a reversible ring $S$ such that $S[x]$ and $S[x,x^{-1}]$ are not i-reversible. In this section, we provide an example of a non-reversible, i-reversible ring $S$ such that $S[x]$ and $S[x,x^{-1}]$ are not i-reversible. We further give two independent sufficient conditions for the polynomial ring and Laurent polynomial ring to be i-reversible.  
	
We need the following result from \cite{khurana2020reversible}, 
	\begin{prop}\label{P5.1}\cite[Corollary~3.2]{khurana2020reversible}
		For any ring $R$ and any $n>1$, $T_n(R)$ is i-reversible if and only if $n=2$ and $R$ is reversible and has only trivial idempotents.     
	\end{prop}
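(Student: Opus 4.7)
The plan is to prove both directions separately: the forward direction requires extracting three conclusions from i-reversibility of $T_n(R)$, while the converse amounts to a case analysis on the diagonal entries.

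For the forward direction, I would first rule out $n \geq 3$ by exhibiting explicit matrices inside the upper-left $3 \times 3$ block of $T_n(R)$. Taking $A = E_{22} + E_{33}$ and $B = E_{12} + E_{22} + E_{23}$, a direct computation yields $AB = E_{22} + E_{23}$, which is a non-zero idempotent, while $BA = E_{12} + E_{22} + E_{23}$ satisfies $(BA)^2 = E_{12} + E_{13} + E_{22} + E_{23}$, differing from $BA$ in the $(1,3)$ entry. So $T_n(R)$ is not i-reversible for $n \geq 3$. Once $n = 2$ is forced, the remaining conclusions come from corner considerations. Since $E_{11}$ is a non-trivial idempotent in $T_2(R)$, Proposition 2.1 of \cite{khurana2020reversible} implies $E_{11} T_2(R) E_{11} \cong R$ is reversible. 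If $R$ had a non-trivial idempotent $e$, the matrices $A = \begin{pmatrix} e & 1 \\ 0 & 0 \end{pmatrix}$ and $B = \begin{pmatrix} 1 & 0 \\ 0 & 0 \end{pmatrix}$ would give $AB = \begin{pmatrix} e & 0 \\ 0 & 0 \end{pmatrix}$ (a non-zero idempotent) yet $BA = \begin{pmatrix} e & 1 \\ 0 & 0 \end{pmatrix}$ with $(BA)^2 = \begin{pmatrix} e & e \\ 0 & 0 \end{pmatrix} \neq BA$, contradicting i-reversibility; hence $R$ has only trivial idempotents.

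For the converse, write $A = \begin{pmatrix} a_1 & a_2 \\ 0 & a_3 \end{pmatrix}$ and $B = \begin{pmatrix} b_1 & b_2 \\ 0 & b_3 \end{pmatrix}$ with $AB$ a non-zero idempotent. Then $a_1 b_1, a_3 b_3 \in R$ are idempotent and hence each equals $0$ or $1$; the case of both being zero contradicts non-zeroness of $AB$. When exactly one of them is $1$, reversibility of $R$ converts the other relation $a_i b_i = 0$ into $b_i a_i = 0$, the trivial-idempotents hypothesis upgrades the one-sided inverse $a_j b_j = 1$ to $b_j a_j = 1$ (since $b_j a_j$ is then idempotent in $R$), and $BA$ turns out to be idempotent by direct verification. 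When both diagonal products are $1$, the same argument produces $b_1 a_1 = b_3 a_3 = 1$; idempotence of $AB$ forces $a_1 b_2 + a_2 b_3 = 0$, and multiplying this on the left by $b_1$ and on the right by $a_3$ (using $b_1 a_1 = 1 = b_3 a_3$) yields $b_1 a_2 + b_2 a_3 = 0$, which is exactly what is needed for $BA$ to be idempotent.

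The main obstacle lies in the asymmetric cases, and reversibility of $R$ is precisely the hypothesis needed there: without it, $b_i a_i$ could be a non-zero non-idempotent element, obstructing $BA$ from being idempotent, and one can even construct $R$ with only trivial idempotents but not reversible for which $T_2(R)$ fails i-reversibility. The trivial-idempotents hypothesis, by contrast, is used in every case to upgrade one-sided inverses to two-sided ones. I would keep careful track of where each assumption enters, since both hypotheses are individually necessary.
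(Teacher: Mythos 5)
Your argument is correct. Note, however, that the paper does not prove this statement at all: it is imported verbatim as \cite[Corollary~3.2]{khurana2020reversible} and used as a black box, so there is no internal proof to compare against. What you have written is a complete, self-contained derivation, and every step checks out: the matrices $A=E_{22}+E_{33}$, $B=E_{12}+E_{22}+E_{23}$ do satisfy $AB=E_{22}+E_{23}$ idempotent while $(BA)^2-BA=E_{13}\neq 0$, which kills $n\geq 3$; the corner ring $E_{11}T_2(R)E_{11}\cong R$ forces reversibility; your $2\times 2$ counterexample with a non-trivial idempotent $e$ correctly yields $(BA)^2=\left(\begin{smallmatrix} e & e\\ 0&0\end{smallmatrix}\right)\neq BA$; and in the converse the case analysis on the idempotents $a_1b_1,a_3b_3\in\{0,1\}$ works, with the identity $b_1(a_1b_2+a_2b_3)a_3=b_1a_2+b_2a_3$ handling the case where both diagonal products equal $1$. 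Two very minor points you should spell out if writing this up: in the ``both zero'' case you must also observe that the $(1,2)$ entry of the idempotent $AB$ satisfies $u=(a_1b_1)u+u(a_3b_3)=0$, so that $AB=0$ outright; and in the asymmetric case the off-diagonal condition $(b_1a_1)v+v(b_3a_3)=v$ holds automatically once the diagonal of $BA$ is $\{1,0\}$, which is worth stating explicitly since it is the reason no constraint on $v$ is needed there. Your closing remark that reversibility is genuinely needed in the asymmetric case is also right, since $a_3b_3=0$ only gives $(b_3a_3)^2=0$, not $b_3a_3=0$.
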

	
	We give an example of a non-reversible, i-reversible ring $S$ such that $S[x]$ is not i-reversible.
	\begin{eg}\label{E3.17}
		In \cite[Example~2.1.]{kim2003extensions}, Kim and Lee provide a reversible ring $R$ such that $R[x]$ is not reversible. Further, Jung et.al in \cite[Example 2.1]{jung2019reversibility} show that $R$ has only trivial idempotents. By Proposition \ref{P5.1}, $S=T_2(R)$ is i-reversible. Clearly, $S$ is not reversible (for example $E_{12}E_{11}=0$ and $E_{11}E_{12}\neq 0$). Next, we show $S[x]$ is not i-reversible. Since $S[x]\cong \begin{bmatrix}
		R[x]&R[x]\\
		0&R[x]
		\end{bmatrix}$,  if $S[x]$ were i-reversible then $R[x]$ would be reversible (by Proposition \ref{P5.1}),  which is a contradiction. Hence, $S[x]$ is not i-reversible. Also, $S[x,x^{-1}] $  is not i-reversible since any subring of an i-reversible ring is  i-reversible.
	\end{eg}
	
We need the following result from \cite{kanwar2013idempotents},

\begin{theorem}\cite[Theorem~5]{kanwar2013idempotents}
For a ring $R$ the following conditions are equivalent,
\begin{enumerate}
    \item $R$ is abelian.
    \item Idempotents of $R$ commute with units of $R$.
    \item The set of idempotents of $R[[ x ]]$ is equal to the set of idempotents of $R$.
    \item The set of idempotents of $R[x,x^{-1}]$ is equal to the set of idempotents of $R$.
    \item The set of idempotents of $R[x]$ is equal to the set of idempotents of $R$.
    \item There exists $n \geq 1$ such that $R[x]$ does not contain idempotents which are polynomials of degree $n$.
\end{enumerate}
\end{theorem}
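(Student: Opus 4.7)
The plan is to prove the six conditions equivalent by the scheme $(1) \Leftrightarrow (2)$, $(1) \Rightarrow (3), (4), (5)$, $(3) \Rightarrow (5)$, $(4) \Rightarrow (5)$, $(5) \Rightarrow (6)$, and $(6) \Rightarrow (1)$. The implications $(1) \Rightarrow (2)$, $(3) \Rightarrow (5)$, $(4) \Rightarrow (5)$, and $(5) \Rightarrow (6)$ are either definitional or follow from the inclusions $R[x] \hookrightarrow R[[x]]$ and $R[x] \hookrightarrow R[x,x^{-1}]$ (an idempotent of $R[x]$ is an idempotent of the larger ring, so if the latter sits inside $R$, so does the former). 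For $(2) \Rightarrow (1)$, given an idempotent $e$ and any $r \in R$, I would use that $u := 1 + (1-e)re$ is a unit with inverse $1 - (1-e)re$, since $\bigl((1-e)re\bigr)^2 = 0$. By $(2)$, $e$ commutes with $u$, which simplifies to $(1-e)re = 0$; the symmetric choice $1 + er(1-e)$ gives $er(1-e) = 0$, so $er = re$.

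For $(1) \Rightarrow (5)$, let $f \in R[x]$ be an idempotent with constant term $a_0$. Comparing constant terms in $f^2 = f$ shows $a_0$ is an idempotent of $R$, hence central by hypothesis. This yields a ring decomposition $R[x] \cong (a_0 R)[x] \times ((1-a_0)R)[x]$, reducing the problem to the two sub-cases where the constant term of the idempotent is either the unity of the summand or zero. Replacing $f$ by $1-f$ swaps these cases, so it suffices to show that a zero-constant-term idempotent vanishes: writing $f = xg$ and using that left multiplication by $x$ is injective on $R[x]$, the relation $f^2 = f$ collapses to $g = xg^2$, and iterating shows $g$ is divisible by $x^N$ for every $N$, which is impossible for a non-zero polynomial of finite degree. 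The identical valuation argument works for $R[[x]]$ since $\bigcap_N (x^N) = 0$ in $R[[x]]$, giving $(1) \Rightarrow (3)$. For $(1) \Rightarrow (4)$, there is no privileged constant term, but $f(1)$ is an idempotent of $R$ and hence central, and the induced decomposition of $R[x,x^{-1}]$ reduces the problem to the polynomial case after clearing denominators with a suitable power of $x$.

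Finally, for $(6) \Rightarrow (1)$, I would argue by contraposition: if $R$ is not abelian, there exist an idempotent $e$ and $a \in R$ with $(1-e)ae \neq 0$, and then for each $n \geq 1$ the polynomial $p_n(x) = e + (1-e)ae \cdot x^n$ is an idempotent of $R[x]$ of degree exactly $n$ (the check uses $e \cdot (1-e)ae = 0$, $(1-e)ae \cdot e = (1-e)ae$, and $\bigl((1-e)ae\bigr)^2 = 0$), contradicting condition $(6)$. The main obstacle is the $(1) \Rightarrow (4)$ step: Laurent polynomials lack the canonical constant-term idempotent that drives the decomposition-plus-valuation strategy in the polynomial and power-series cases, so one must first manufacture a central idempotent from $f$ itself (through evaluation at a unit such as $x = 1$, or by analyzing extreme-degree coefficients) before the remaining machinery can be applied.
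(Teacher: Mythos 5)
First, a point of comparison: the paper does not prove this statement at all --- it is imported verbatim as \cite[Theorem~5]{kanwar2013idempotents} and used as a black box --- so your argument has to be judged on its own terms. Most of it holds up. The cycle $(1)\Leftrightarrow(2)$, $(1)\Rightarrow(3)\Rightarrow(5)\Rightarrow(6)\Rightarrow(1)$ is complete and correct: the unipotent units $1+(1-e)re$ for $(2)\Rightarrow(1)$, the decomposition along the central constant-term idempotent followed by the $x$-adic valuation argument for $(1)\Rightarrow(3)$ and $(1)\Rightarrow(5)$, and the degree-$n$ idempotents $e+(1-e)ae\,x^n$ for $(6)\Rightarrow(1)$ all check out.

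The genuine gap is $(1)\Rightarrow(4)$, which you correctly flag as the obstacle but do not close, and the reduction you sketch fails as stated. ``Clearing denominators with a suitable power of $x$'' does not reduce to the polynomial case: if $f\in R[x,x^{-1}]$ is idempotent and $g=x^kf\in R[x]$, then $g$ satisfies $g^2=x^kg$ rather than $g^2=g$, so the result for $R[x]$ cannot be applied to $g$; and analyzing extreme-degree coefficients only shows they square to zero, which is no contradiction in a non-reduced abelian ring. Your evaluation idea does work, but the element whose powers you must iterate on is $x-1$, not $x$. Concretely: $e:=f(1)$ is a central idempotent of $R$, and the decomposition $R\cong eR\times(1-e)R$ together with replacing $f$ by the identity minus $f$ in the first factor reduces everything to an idempotent $f$ with $f(1)=0$. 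Writing $f=\sum_i c_ix^i$ with $\sum_i c_i=0$ gives $f=\sum_i c_i(x^i-1)\in(x-1)R[x,x^{-1}]$, say $f=(x-1)g$; since $x-1$ is a central non-zero-divisor of $R[x,x^{-1}]$, the relation $f^2=f$ forces $g=(x-1)g^2=\cdots=(x-1)^Ng^{N+1}$ for every $N$. A nonzero Laurent polynomial cannot be divisible by $(x-1)^N$ for all $N$, because multiplying a nonzero element by $(x-1)^N$ widens the gap between its top and bottom degrees by exactly $N$ (the extreme coefficients of the product are, up to sign, those of the other factor). Hence $g=0$, so $f=0$, and $(1)\Rightarrow(4)$ follows. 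Without some such argument, nothing in your scheme points into condition $(4)$, so the six conditions are not all shown equivalent.
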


	\begin{theorem}
		\label{P3.23}
		If a ring $R$ has only trivial idempotents then $R[x]$, $R[[x]]$ and $R[x,x^{-1}] $ are i-reversible. 
	\end{theorem}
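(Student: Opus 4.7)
The plan is to exploit the Kanwar--Leroy--Matczuk theorem quoted immediately before the statement, which characterizes abelian rings via their idempotents in $R[x]$, $R[[x]]$ and $R[x,x^{-1}]$. First I would observe that any ring $R$ with only trivial idempotents is automatically abelian, since $0$ and $1$ are central in every ring. Applying the cited theorem then yields that the idempotent sets of $R[x]$, $R[[x]]$ and $R[x,x^{-1}]$ all coincide with the idempotent set of $R$, which is $\{0,1\}$.

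The second step is to establish the elementary fact that any ring $S$ with only trivial idempotents is i-reversible. Suppose $a,b \in S$ with $ab$ a non-zero idempotent. Then $ab \in \{0,1\}$, forcing $ab = 1$. A one-line computation
\[
(ba)^2 = b(ab)a = b \cdot 1 \cdot a = ba
\]
shows that $ba$ is idempotent. Hence $S$ is i-reversible by definition.

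Combining the two steps finishes the proof: each of $R[x]$, $R[[x]]$, $R[x,x^{-1}]$ has only trivial idempotents, and therefore each is i-reversible. There is no real obstacle here, since the heavy lifting is done by the Kanwar--Leroy--Matczuk theorem; the only thing to notice is the near-tautological observation that the property ``has only trivial idempotents'' already implies i-reversibility, because $ab = 1$ always forces $ba$ to be idempotent in any associative ring.
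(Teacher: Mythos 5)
Your proposal is correct and follows essentially the same route as the paper: both invoke the Kanwar--Leroy--Matczuk theorem to transfer the triviality of idempotents from $R$ (which is abelian, having only trivial idempotents) to $R[x]$, $R[[x]]$ and $R[x,x^{-1}]$, and then conclude i-reversibility. The only difference is that you spell out the elementary fact that a ring with only trivial idempotents is i-reversible (via $ab=1 \Rightarrow (ba)^2 = b(ab)a = ba$), which the paper uses implicitly throughout.
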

	\begin{proof}
		Since, the only idempotents of $R$ are trivial, $R$ is an Abelian ring. Therefore, idempotents of $R[x]$, $R[[x]]$ and $R[x,x^{-1}] $ are exactly idempotents of $R$ (see \cite[Theorem~5]{kanwar2013idempotents}).  Therefore $R[x]$, $R[[x]]$ and $R[x,x^{-1}] $ are rings with only trivial idempotents and hence i-reversible.
	\end{proof}
	
	\begin{theorem}
Let $R$ be an abelian ring. Then $R[x]$ is i-reversible if and only if $R[x,x^{-1}]$ is i-reversible.
\end{theorem}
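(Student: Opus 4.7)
The plan is to prove the biconditional by splitting along its two directions. The backward direction (i-reversibility of $R[x,x^{-1}]$ implies i-reversibility of $R[x]$) is essentially free: $R[x]$ is a unital subring of $R[x,x^{-1}]$, and i-reversibility is plainly inherited by subrings, as already observed in Example \ref{E3.17}.

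For the forward direction, I would assume $R[x]$ is i-reversible and case-split on whether $R$ has a non-trivial idempotent. If $R$ has only trivial idempotents, then Theorem \ref{P3.23} applies directly and gives that $R[x,x^{-1}]$ is i-reversible, closing this subcase immediately. Otherwise, let $e$ be a non-trivial idempotent of $R$; since $R$ is abelian, $e$ is central in $R$, hence also central in $R[x]$. Thus $R[x]$ is an i-reversible ring containing the non-trivial central idempotent $e$, so \cite[Proposition~2.1.(4)]{khurana2020reversible} upgrades the i-reversibility of $R[x]$ to full reversibility.

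The remaining task is then to push reversibility from $R[x]$ to $R[x,x^{-1}]$ (which trivially implies i-reversibility). Given $F, G \in R[x,x^{-1}]$ with $FG = 0$, I would choose $N \geq 0$ large enough that $x^N F, x^N G \in R[x]$; since $x$ is central, $(x^N F)(x^N G) = x^{2N} FG = 0$ in $R[x]$. Reversibility of $R[x]$ yields $(x^N G)(x^N F) = x^{2N} GF = 0$, and multiplying through by $x^{-2N}$ (a unit in $R[x,x^{-1}]$) forces $GF = 0$. Hence $R[x,x^{-1}]$ is reversible, in particular i-reversible.

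I do not anticipate a serious obstacle. The key conceptual step is noticing that the abelian hypothesis is precisely what is needed to ensure any non-trivial idempotent of $R$ is central in $R[x]$; this is what allows \cite[Proposition~2.1.(4)]{khurana2020reversible} to convert i-reversibility into full reversibility, and from there the standard clearing-of-denominators by a power of $x$ handles the transfer from $R[x]$ to $R[x,x^{-1}]$. Without the abelian assumption, the final clearing trick still works but the upgrade to reversibility would fail, so that is the only delicate point to get right.
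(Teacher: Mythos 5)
Your proof is correct and follows essentially the same route as the paper: reduce to the case where $R$ has a non-trivial idempotent, use the abelian hypothesis to make it central in $R[x]$, upgrade i-reversibility of $R[x]$ to reversibility via \cite[Proposition~2.1.(4)]{khurana2020reversible}, and transfer reversibility to $R[x,x^{-1}]$. The only difference is that the paper cites \cite[Lemma~2.2]{kim2003extensions} for that last transfer, whereas you prove it directly by clearing denominators with a power of the central regular element $x$ --- which is exactly the content of that lemma.
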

\begin{proof}
The only case which is not obvious is when $R$ has a non-trivial idempotent and $R[x]$ is i-reversible. In this case, $R$ becomes reversible and the proof follows from \cite[Lemma~2.2.]{kim2003extensions}
\end{proof}
	\begin{definition}
	    A ring $R$ is said to be Armendariz if whenever polynomials $f(x)=a_0 +a_1x+\dots+a_mx^m$; $\ g(x)=b_0+b_1x+\dots+b_nx^n$ $\in R[x]$ satisfy $f(x)g(x)=0$, then $a_ib_j =0$ for each $i,j$.
	\end{definition}
	\begin{theorem}\label{T3.21}
		Let $R$ be an Armendariz ring, then the following statements are equivalent:\\
		(1) $R$ is i-reversible.\\
		(2) $R[x]$ is i-reversible.\\
		(3) $R[x,x^{-1}]$ is i-reversible.
	\end{theorem}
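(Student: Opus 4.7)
The plan is to prove the cycle $(2) \Rightarrow (1)$, $(3) \Rightarrow (2)$, $(1) \Rightarrow (2)$, and $(2) \Rightarrow (3)$, relying heavily on results already established in the paper. The implications $(2) \Rightarrow (1)$ and $(3) \Rightarrow (2)$ are immediate from the fact that any subring of an i-reversible ring is i-reversible, applied to the natural inclusions $R \hookrightarrow R[x] \hookrightarrow R[x,x^{-1}]$.

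For the main implication $(1) \Rightarrow (2)$, the key observation is the well-known fact (due essentially to Anderson and Camillo) that every Armendariz ring is abelian. With this in hand, I would perform a case analysis on the idempotents of $R$. In the first case, if $R$ has only trivial idempotents, then Theorem \ref{P3.23} directly yields that $R[x]$ is i-reversible. In the second case, $R$ has a non-trivial idempotent, which must be central by abelianness; then \cite[Proposition~2.1.(4)]{khurana2020reversible} upgrades the i-reversibility of $R$ to full reversibility. Invoking the Kim--Lee result that polynomial rings over Armendariz reversible rings are reversible (mentioned in the introduction), we obtain that $R[x]$ is reversible and hence i-reversible. Finally, $(2) \Rightarrow (3)$ follows from the immediately preceding theorem: for abelian $R$, the i-reversibility of $R[x]$ and $R[x,x^{-1}]$ coincide, and Armendariz forces the abelian hypothesis.

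The main subtlety is that a direct coefficient-by-coefficient argument, analogous to the Kim--Lee proof for reversible rings, appears hard here, because i-reversibility yields no information from products that happen to be zero or non-idempotent. Rather than pushing through such an argument, the case split on idempotents of $R$ cleanly sidesteps the difficulty: it either avoids non-trivial idempotents altogether (reducing to Theorem \ref{P3.23}) or boosts i-reversibility to reversibility, where the classical Kim--Lee argument applies. The only ingredient outside the paper is the fact that Armendariz implies abelian, which I would either cite from Anderson--Camillo or include as a short preliminary remark.
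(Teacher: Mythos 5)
Your proposal is correct and follows essentially the same route as the paper: a case split on whether $R$ has a non-trivial idempotent, handling the first case via the ``only trivial idempotents'' results and the second by upgrading i-reversibility to reversibility through \cite[Proposition~2.1.(4)]{khurana2020reversible} and then invoking the Kim--Lee results for Armendariz rings. You are merely more explicit than the paper about the (needed) fact that Armendariz rings are abelian and about the easy subring implications.
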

	\begin{proof}
		If $R$ is a ring with only trivial idempotents, the result is clear in view of \cite[Theorem~5.]{kanwar2013idempotents}. If  $R$ contains a non-trivial idempotent then $R$ is reversible (see \cite[Proposition~2.1.(4)]{khurana2020reversible}). The result is evident in view of \cite[Proposition~2.3]{kim2003extensions}.
	\end{proof} 
	\begin{eg}
	    Let $S$ be a commutative ring with only trivial idempotents. $R = T_2(S)$ is not Armendariz (see \cite[Example~1]{kim2000armendariz}). By Proposition \ref{P5.1} we have $R[x]\cong \begin{bmatrix}
		S[x]&S[x]\\
		0&S[x]
		\end{bmatrix}$ is i-reversible.
	\end{eg}
The following examples show that $R$ containing only trivial idempotents and $R$ being Armendariz are independent conditions.	
\begin{eg}
Let $S$ be a reduced ring with a non-trivial idempotent $e$ (for example $\mathbb{Z}_6$). Then the ring $R=T(S,S)$ is an Armendariz ring (see \cite[Corollary~4]{kim2000armendariz}) with non-trivial idempotents (for example the diagonal matrix with diagonal entries $e$).
\end{eg}
\begin{eg}\label{EXAMPLE}
Let $D$ be a domain, $S=T(D,D)$ and $R=T(S,S)$ which is a ring with only trivial idempotents (see Theorem \ref{P4.14}). However, $R$ is not Armendariz (see \cite[Example~5]{kim2000armendariz}).
\end{eg}
	\section{ Skew Polynomial Rings }
In this section, we ask whether Theorem \ref{P3.23} and Theorem  \ref{T3.21} generalize to the case of skew polynomial rings($R[x;\sigma]$). We also provide conditions on $R$ and $\sigma$ for which $R[x;\sigma]$ is not i-reversible.	

\begin{definition}
	Let $R$ be a ring and $\sigma$ be an endomorphism of $R$ such that $\sigma(1)=1$. We construct a skew polynomial ring by stipulating that for $b$ $\in$ $R$, $xb=\sigma(b)x$ on the set of left polynomials over $R$  and multiplication between elements in $R$ defined in the usual sense. The ring of skew polynomials over $R$ is denoted by $R[x;\sigma]$.
\end{definition}
\begin{remark}
The following statements are easy to observe,
\begin{enumerate}
    \item Skew polynomial rings can be similarly constructed on the set of right polynomials over $R$ by stipulating $bx=x\sigma(b)$. In this case, we can observe that $R[x,\sigma]/(x^2)$ is isomorphic to the Nagata extension of $R$ by $R$ and $\sigma$.
    \item Suppose $R$ is a ring in which $\sigma(1)=1$ implies $\sigma=Id_R$ (for example $\mathbb{Z},\ \mathbb{Q},\  \mathbb{Z}_n$), then $R[x;\sigma]=R[x]$.
    
\end{enumerate}

\end{remark}
We first give a sufficient condition for the i-reversibility of $R[x;\sigma]$. For that, we need the following Lemma.
\begin{lem}\label{LEM}
 Let $R$ be a ring and $\sigma$ be an endomorphism of $R$ which fixes a central idempotent $e$. Then any idempotent in $R[x;\sigma]$ with constant term $e$ has to be $e$ itself.
\end{lem}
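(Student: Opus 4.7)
The plan is to write the idempotent in the form $f(x) = e + a_1 x + a_2 x^2 + \cdots + a_n x^n \in R[x;\sigma]$ and extract the vanishing of each $a_k$, $k \geq 1$, by comparing coefficients in $f(x)^2 = f(x)$ inductively on $k$.

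First I would observe that since $\sigma(e) = e$, an easy induction gives $\sigma^i(e) = e$ for every $i \geq 0$. Using the skew commutation rule $x^i b = \sigma^i(b)\, x^i$, the square of $f$ expands as
\[
f(x)^2 \;=\; \sum_{k=0}^{2n} \Bigl(\sum_{i+j=k} a_i\, \sigma^i(a_j)\Bigr) x^k,
\]
and equating the coefficient of $x^k$ with $a_k$ (taking $a_k = 0$ for $k > n$) reproduces the identity $e^2 = e$ in degree $0$.

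For the inductive step I would assume $a_1 = \cdots = a_{k-1} = 0$ for some $k \geq 1$ and look at the coefficient of $x^k$. In the sum $\sum_{i+j=k} a_i \sigma^i(a_j)$, every term with $0 < i < k$ vanishes because $a_i = 0$, and every term with $0 < j < k$ vanishes because $\sigma^i(a_j) = \sigma^i(0) = 0$; only the endpoints $(i,j) = (0,k)$ and $(i,j) = (k,0)$ survive, yielding
\[
e a_k + a_k \sigma^k(e) \;=\; e a_k + a_k e \;=\; a_k.
\]
Since $e$ is central, $e a_k = a_k e$, so the identity collapses to $2\, a_k e = a_k$.

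The last step is a characteristic-free trick to deduce $a_k = 0$ from $2 a_k e = a_k$: multiplying this equation on the right by $e$ and using $e^2 = e$ gives $2 a_k e = a_k e$, hence $a_k e = 0$; substituting back into $a_k = 2 a_k e$ then forces $a_k = 0$. Iterating the induction yields $f(x) = e$, as required. There is no serious obstacle here beyond bookkeeping with the skew product; the two hypotheses (centrality of $e$ and $\sigma$-invariance of $e$) are precisely what is needed to collapse the mixed cross terms and the $\sigma^k(e)$ appearing in degree $k$, reducing the analysis to the elementary relation $2 a_k e = a_k$.
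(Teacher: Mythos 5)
Your proof is correct and follows essentially the same route as the paper: expand $f(x)^2=f(x)$, compare coefficients degree by degree, and kill each $a_k$ inductively using that $e$ is central and $\sigma$-fixed. The only (cosmetic) difference is the last step: the paper deduces $a_k=0$ from $a_k(2e-1)=0$ by noting $2e-1$ is a unit, whereas you multiply $2a_ke=a_k$ by $e$ to get $a_ke=0$ and then $a_k=0$ --- both are fine.
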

\begin{proof}
Let $p(x)=e+d_1x+d_2x^2+\cdots+d_kx^k \in R[x;\sigma]$ be an idempotent. Since $(p(x))^2=p(x)$ and $e$ is a central idempotent fixed by $\sigma$, comparing the coefficient of $x$ in $(p(x))^2$ and $p(x)$, we get $(2e-1)d_1=0$. Since $2e-1$ is a unit, we get $d_1=0$. Similarly, for all $i\geq 2$, comparing the coefficients of $x^i$ recursively gives $d_2=d_3=\cdots=d_k=0$. Therefore, $p(x)=e$.
\end{proof}

\begin{theorem}\label{T3.29}
Let $R$ be a ring with only trivial idempotents then $R[x;\sigma]$ is i-reversible.
\end{theorem}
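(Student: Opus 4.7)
The plan is to reduce everything to showing that $R[x;\sigma]$ has only trivial idempotents, from which i-reversibility follows by a one-line argument. I would first observe that in any ring $S$ whose only idempotents are $0$ and $1$, i-reversibility is automatic: if $fg$ is a non-zero idempotent in $S$, then $fg = 1$, and then $(gf)^2 = g(fg)f = gf$, so $gf$ is itself an idempotent. Thus it suffices to prove that the only idempotents in $R[x;\sigma]$ are $0$ and $1$.

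To establish this, let $f(x) = a_0 + a_1 x + \cdots + a_n x^n \in R[x;\sigma]$ be an idempotent. I would compute the constant term of $f(x)^2$. Since multiplication in $R[x;\sigma]$ is given by $xb = \sigma(b)x$, every product $a_i x^i \cdot a_j x^j$ lies in $R x^{i+j}$, so the constant term of $f(x)^2$ is simply $a_0^2$. Equating $a_0^2 = a_0$ and using the hypothesis that $R$ has only trivial idempotents gives $a_0 \in \{0, 1\}$. Both $0$ and $1$ are central idempotents of $R$ fixed by $\sigma$ (the latter by the convention $\sigma(1) = 1$). Therefore Lemma \ref{LEM} applies and forces $f(x) = a_0 \in \{0, 1\}$.

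Combining the two observations, $R[x;\sigma]$ has only trivial idempotents and is consequently i-reversible. No serious obstacle arises in this plan; the main point, which is handled cleanly by Lemma \ref{LEM}, is that although the constant term of an idempotent in $R[x;\sigma]$ is forced to be $0$ or $1$, one still needs to rule out higher-degree contributions, and this is precisely where the hypothesis $\sigma(1) = 1$ (so that $2 \cdot 1 - 1 = 1$ and $2 \cdot 0 - 1 = -1$ are both units fixed by $\sigma$) enters through the lemma's recursive cancellation argument.
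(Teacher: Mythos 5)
Your proof is correct and follows essentially the same route as the paper's: both arguments reduce to the observation that the constant term of an idempotent in $R[x;\sigma]$ is an idempotent of $R$, hence $0$ or $1$, and then invoke Lemma \ref{LEM} to kill the higher-degree terms. The only cosmetic difference is that you package this as the (slightly stronger) statement that $R[x;\sigma]$ has only trivial idempotents, whereas the paper applies the lemma directly to the product $f(x)g(x)$ to conclude it equals $1$.
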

\begin{proof}
 Let $f(x)g(x)$ be a non-zero idempotent in $R[x;\sigma]$, where  $f(x)=\sum\limits_{i=0}^{n} a_ix^i$ and $g(x)=\sum\limits_{j=0}^{m} b_jx^j$. Then $a_0b_0$ is an idempotent in $R$. Since $R$ has only trivial idempotents the choices for $a_0b_0$ are $0$ and $1$. But by Lemma \ref{LEM} and the fact that $f(x)g(x) \neq 0$ we get, $a_0b_0=1$ and hence  $f(x)g(x)=1$. This proves that $g(x)f(x)$ is an idempotent making $R[x;\sigma]$ i-reversible.
\end{proof}

We now see conditions under which $R[x;\sigma]$ is not i-reversible.
\begin{theorem}\label{T}
Let $R$ be an i-reversible ring with a non-trivial central idempotent $e$. If $\sigma$ is a non-injective endomorphism of $R$ such that  $\sigma(e)=e $ then $R[x;\sigma]$ is not i-reversible.
\end{theorem}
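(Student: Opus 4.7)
The plan is to leverage two tools already on the table: \cite[Proposition~2.1.(4)]{khurana2020reversible}, which upgrades i-reversibility to reversibility in the presence of a non-trivial central idempotent, and the non-injectivity of $\sigma$, which cheaply produces a reversibility violation in $R[x;\sigma]$. My target is to show that $R[x;\sigma]$ is not reversible while still carrying a non-trivial central idempotent, and then derive a contradiction from the assumption of i-reversibility.

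First, I would verify that $e$ remains a non-trivial central idempotent in $R[x;\sigma]$. Since $e$ is central in $R$, it commutes with every coefficient, and the hypothesis $\sigma(e)=e$ gives $xe=\sigma(e)x=ex$, so $e$ also commutes with $x$. Thus $e$ is central in $R[x;\sigma]$, and it is non-trivial because it is non-trivial in $R$. Next, using the non-injectivity of $\sigma$, I would pick a non-zero $s\in R$ with $\sigma(s)=0$. Passing to the skew polynomial ring, $xs=\sigma(s)x=0$, whereas $sx$ is a non-zero polynomial of degree one in $R[x;\sigma]$. This pair exhibits the failure of reversibility of $R[x;\sigma]$.

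Finally, I would finish by contradiction. If $R[x;\sigma]$ were i-reversible, then, because $e$ is a non-trivial central idempotent of $R[x;\sigma]$, \cite[Proposition~2.1.(4)]{khurana2020reversible} would force $R[x;\sigma]$ to be reversible, contradicting the preceding paragraph. I do not foresee a genuine obstacle here, as each step is brief. The main pitfall I would consciously avoid is attempting a direct construction of $f,g\in R[x;\sigma]$ with $fg$ a non-zero idempotent and $gf$ not idempotent; because $e$ is central and $\sigma$-fixed, natural candidates of the form $e+sx$, $e-sx$, etc., tend to collapse to $fg=gf$. The indirect route through the corner-style result \cite[Proposition~2.1.(4)]{khurana2020reversible} bypasses that difficulty entirely.
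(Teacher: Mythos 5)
Your proposal is correct and follows essentially the same route as the paper: establish that $e$ stays a non-trivial central idempotent in $R[x;\sigma]$ (via $\sigma(e)=e$), invoke \cite[Proposition~2.1.(4)]{khurana2020reversible} to force reversibility under the i-reversibility assumption, and contradict this with $xs=\sigma(s)x=0$ but $sx\neq 0$ for a nonzero $s$ in the kernel of $\sigma$. No substantive differences.
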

\begin{proof}
Clearly, $e$ is a non-trivial idempotent in $R[x;\sigma]$. Also, $x^ke=\sigma^k(e)x^k =ex^k$ for any $k \in \mathbb{N}$ . Therefore, $e$ is a non-trivial central idempotent in $R[x;\sigma]$. Assume $R[x;\sigma]$ is i-reversible. The fact that $e$ is a non-trivial central idempotent implies $R[x;\sigma]$ is reversible (see \cite[Proposition~2.1.(4)]{khurana2020reversible}). As $\sigma$ is a non-injective, there exists $b \neq 0 \in R$ such that $\sigma(b)=0$. Clearly, $bx \neq 0$ and $xb=0$, which is a contradiction. Therefore, $R[x;\sigma]$ is not i-reversible.
\end{proof}

We now give two examples to show that the conditions on $\sigma$ from Theorem \ref{T} are not sufficient conditions for the non i-reversibility of $R[x;\sigma]$.

\begin{eg}
Let $R$ be the ring of real sequences with pointwise addition and multiplication as the operations. Let $\sigma$ be the endomorphism of $R$ given by $\sigma(a_1,a_2,a_3,a_4,\cdots)=(a_2,a_3,a_4,\cdots)$. Then $\sigma$ is clearly non-injective and does not fix any non-trivial idempotent in $R$. Consider the poynomials $f(x)=(0,1,1,0,0,0,\cdots)x+(0,1,0,0,\cdots)$ and $g(x)=(0,1,1,0,0,\cdots)$ in $R[x,\sigma]$. Then $fg$ is a non-zero idempotent but $gf=f$ is not. Thus $R[x,\sigma]$ is not i-reversible.
\end{eg}

\begin{eg}
Let $S$ be a ring with a non-trivial central idempotent $e$ and let $R=S\times S$. Let $\sigma$ be the endomorphism on $R$ such that $\sigma(a,b)=(b,a)$. It is not hard to see that $(e,e)$ is a non trivial central idempotent in $R[x, \sigma]$. Suppose, $R[x, \sigma]$ is i-reversible. By \cite[Proposition~2.1.(4)]{khurana2020reversible}, $R[x, \sigma]$ is reversible. This gives a contradiction since we have $f=(1,0)x$  and $g=(1,0)$
$\in$ $R[x, \sigma]$ such that $fg =0 $ however $gf \neq 0$. \end{eg}

The following example shows that unlike polynomial rings, $R$ being i-reversible and Armendariz is not a sufficient condition for i-reversibility of $R[x;\sigma]$.
\begin{eg}
Let $S$ be a reduced ring with a non-trivial central idempotent $e$. Let $R=T(S,S)$ with the endomorphism $\sigma$ defined by $\sigma\begin{bmatrix}
	a&b\\
	0&a\\
	\end{bmatrix}=\begin{bmatrix}
	a&0\\
	0&a\\
	\end{bmatrix}$. Since $S$ is reduced, $R$ is Armendariz (see \cite[Corollary~4]{kim2000armendariz}) and reversible (see \cite[Proposition~1.6]{kim2003extensions}). Using the fact that $\sigma$ fixes the non-trivial central idempotent$\begin{bmatrix}
	e&0\\
	0&e\\
	\end{bmatrix}$ and by Theorem \ref{T}, $R[x;\sigma]$ is not i-reversible.
\end{eg}

 We now provide a class of rings $R$ which may contain non-trivial idempotents yet, $R[x;\sigma]$ is i-reversible (c.f. Example \ref{RIGID}).

\begin{definition}
Let $R$ be a ring with an endomorphism $\sigma$. $R$ is called $\sigma$-Armendariz if for polynomials $p(x)=\sum\limits_{i=0}^ma_ix^i$ and $\ q(x)=\sum\limits_{j=0}^nb_jx^j$ $\in R[x;\sigma]$,  $p(x)q(x)=0$ implies $a_ib_j =0$ for all $0 \leq i\leq m$ and $0 \leq j\leq n$.
\end{definition}

\begin{theorem}\label{THMM}
Let $R$ be an i-reversible ring. If $R$ is  $\sigma$-Armendariz then $R[x;\sigma]$ is i-reversible.
\end{theorem}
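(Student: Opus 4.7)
The plan is to use the $\sigma$-Armendariz hypothesis to collapse the non-zero idempotent $f(x)g(x)$ into $R$, apply the i-reversibility of $R$ to deduce that $b_0a_0$ is an idempotent, and then lift this idempotence back to $g(x)f(x)$ in $R[x;\sigma]$. Let $f(x)=\sum a_ix^i$ and $g(x)=\sum b_jx^j$ with $f(x)g(x)$ a non-zero idempotent. Writing $fg=\sum c_kx^k$, the identities $(fg)(fg-1)=0$ and $(fg-1)(fg)=0$ combined with two applications of $\sigma$-Armendariz give $c_kc_0=c_0c_k=c_k$ for all $k$ and $c_kc_l=c_lc_k=0$ whenever $l\ge 1$. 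In particular $c_l=c_0c_l=0$ for $l\ge 1$, so $fg=c_0=a_0b_0\in R$. Set $e:=a_0b_0$, a non-zero idempotent of $R$. (This is the skew analogue of the fact that for Armendariz rings, idempotents of $R[x]$ lie in $R$.)

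Since $R$ is i-reversible, $b_0a_0$ is an idempotent of $R$. For the remaining task, I would compute
\[(gf)^2=g(fg)f=gef,\qquad\text{so}\qquad (gf)^2-gf=g(e-1)f,\]
and the problem reduces to showing $g(e-1)f=0$ in $R[x;\sigma]$. Since $e^2=e$, we have $(1-e)fg=0$ and $fg(1-e)=0$, and applying $\sigma$-Armendariz to the products $(1-e)f\cdot g$ and $f\cdot g(1-e)$ produces the coefficient relations
\[(1-e)a_ib_j=0\qquad\text{and}\qquad a_ib_j\,\sigma^j(1-e)=0\qquad\text{for all }i,j.\]
Combining these identities with the i-reversibility of $R$, applied to auxiliary non-zero idempotent products involving $e$ and $b_0a_0$, should force each coefficient $\sum_{i+j=m}b_j\sigma^j((1-e)a_i)$ of $g(e-1)f$ to vanish, giving $(gf)^2=gf$ and completing the proof.

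The main obstacle is this final step. The $\sigma$-Armendariz property produces one-sided coefficient relations of the form $(1-e)a_ib_j=0$, but the vanishing of $g(e-1)f$ requires the dual, $\sigma^j$-twisted form $b_j\sigma^j((1-e)a_i)=0$. Transposing a zero-product annihilator from left to right is not automatic under i-reversibility alone (which concerns non-zero idempotent products, not zero products), so the argument has to exploit the idempotent $e$ as a bridge: one constructs non-zero idempotent auxiliaries from $e$ and $b_0a_0$ so that i-reversibility can be invoked on them, and then uses the $\sigma$-compatibility forced by the polynomial identity $fg=e$ to carry the resulting identities across the $\sigma^j$-twist. Managing this simultaneous transposition and twisting of annihilator relations is the delicate part of the proof.
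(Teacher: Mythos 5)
Your opening reduction is sound: the standard Armendariz argument applied to $(fg)(fg-1)=0$ and $(fg-1)(fg)=0$ does force $fg=a_0b_0=e$, a non-zero idempotent of $R$, and the identity $(gf)^2-gf=g(e-1)f$ correctly isolates what remains to be shown. But the proof stops exactly where the real difficulty begins, and you say so yourself. The coefficient of $x^m$ in $g(e-1)f$ is $\sum_{i+j=m}b_j\,\sigma^j\bigl((e-1)a_i\bigr)$, whereas the two $\sigma$-Armendariz applications you have available yield only $(1-e)a_ib_j=0$ and $a_ib_j\sigma^j(1-e)=0$. Passing from these to the twisted, transposed products $b_j\sigma^j((e-1)a_i)$ requires both reversing the order of a zero product and commuting the result past $\sigma^j$. i-reversibility gives you neither: it is a statement about non-zero idempotent products and provides no mechanism for transposing annihilators, and the ``auxiliary non-zero idempotent products involving $e$ and $b_0a_0$'' are never exhibited. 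As written, this is a genuine gap, not a routine verification.

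The ingredient you are missing is that a $\sigma$-Armendariz ring is abelian (\cite[Lemma~1.1]{kim2015alpha}). That is the hinge of the paper's proof, which proceeds by cases rather than by direct computation: if $R$ has only trivial idempotents, Theorem \ref{T3.29} already gives i-reversibility of $R[x;\sigma]$ with no Armendariz hypothesis at all; if $R$ has a non-trivial idempotent, abelianness makes that idempotent central, so i-reversibility of $R$ upgrades to reversibility by \cite[Proposition~2.1.(4)]{khurana2020reversible}, and then \cite[Theorem~5]{pourtaherian2011skew} (reversible plus $\sigma$-Armendariz implies $R[x;\sigma]$ reversible) finishes. Note that even after you know $R$ is reversible, your relation $(1-e)a_ib_j=0$ only transposes to $b_j(1-e)a_i=0$, which still differs from the needed $b_j\sigma^j((1-e)a_i)$ by the $\sigma^j$-twist; this is precisely why the paper delegates the non-trivial-idempotent case to the cited skew-polynomial theorem instead of computing coefficients. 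If you want to salvage your direct approach, you must first establish that $e$ is central via abelianness and then either reprove the cited result or supply an argument that genuinely handles the twist.
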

\begin{proof}
Let $R$ be an i-reversible $\sigma$-Armendariz ring. If $R$ has only trivial idempotents then $R[x;\sigma]$ is i-reversible (see Theorem \ref{T3.29}). Let $R$ contain a non-trivial idempotent $e$. Now, $R$ is abelian since $R$ is $\sigma$-Armendariz,  (see \cite[Lemma~1.1]{kim2015alpha}). Hence $e$ is a central idempotent and $R$ is reversible (see \cite[Proposition~2.1.(4)]{khurana2020reversible}). Finally,  $R$ is a $\sigma$-Armendariz and reversible implies $R[x;\sigma]$ is reversible (see \cite[Theorem~5]{pourtaherian2011skew}) and hence i-reversible.
\end{proof}

\begin{definition}
Let $R$ be a ring with an endomorphism $\sigma$. Then $R$ is called $\sigma$-rigid if for any $a\in R$ with $a\sigma(a)=0$, we have $a=0$. 
\end{definition}

\begin{eg}\label{RIGID}
Let $S$ be an $\alpha$-rigid ring where $\alpha$ is an endomorphism of $S$ and let $R=S\times S$ with component-wise addition and multiplication. Since $S$ is an $\alpha$-rigid ring hence $S$ is reduced (see \cite{hong2000ore}). Therefore, $R$ is reversible. Consider the endomorphism $\sigma$ of $R$, defined by $ \sigma(a,b)=(\alpha(a),\alpha(b))$. In view of \cite[Example~1.4(2)]{kim2015alpha}, we have, $R$ is  $\sigma$-rigid and hence  $\sigma$-Armendariz (see \cite[Lemma~1.3]{kim2015alpha}). It is clear that $R$ contains non-trivial idempotents.

\end{eg}

\begin{eg}  \label{TI}
The ring $R$ in Example \ref{EXAMPLE} contains only trivial idempotents. However, $R$ is not $\text{Id}_R$-Armendariz.
\end{eg} 

\begin{remark}
    The above examples (Example \ref{RIGID} and Example \ref{TI} ) show that $R$ containing only trivial idempotents and $R$ being $\sigma$-Armendariz are independent conditions.
\end{remark}
\section*{Acknowledgments}
The authors would like to sincerely thank the referee for the comments. The comments were very helpful for us to explore the subject in a better way.
	
\end{document}